\theoremstyle{plain}
\newtheorem{theorem}{Theorem}
\newtheorem{corollary}[theorem]{Corollary}
\newtheorem{proposition}[theorem]{Proposition}
\newtheorem{lemma}[theorem]{Lemma}
\theoremstyle{remark}
\newcommand{\Bh}{B(H)}
\newcommand{\Bs}{B(H)_s}
\newcommand{\Bp}{B(H)^+}
\newcommand{\Bpp}{B(H)^{++}}
\newcommand{\R}{\mathbb R}
\renewcommand{\l}{\lambda}
\newcommand{\Mle}{M_{le}}
\newcommand{\ler}[1]{\left( #1 \right)}
\newcommand{\A}{\mathcal A}
\newcommand{\App}{\mathcal A^{++}}
\newcommand{\fel}{1/2}
\newcommand{\mfel}{{-1/2}}
\newcommand{\PI}{P_1(H)}
\begin{document}

\title[]{Characterizations of certain means of positive operators}

\author{Lajos Moln\'ar}
\address{Bolyai Institute, Interdisciplinary Excellence Centre, University of Szeged,
H-6720 Szeged, Aradi v\'ertan\'uk tere 1.,
Hungary, and Institute of Mathematics
Budapest University of Technology and Economics
H-1521 Budapest, Hungary}
\email{molnarl@math.u-szeged.hu}
\urladdr{http://www.math.u-szeged.hu/\~{}molnarl}

\thanks{Ministry of Human Capacities, Hungary grant 20391-3/2018/FEKUSTRAT is acknowledged and the research was supported also by the National Research, Development and Innovation Office – NKFIH, Grant No. K115383.}

\begin{abstract}
In this paper we present some characterizations for quasi-arithmetic operator means (among them the arithmetic and harmonic means) on the positive definite cone of the full algebra of Hilbert space operators, and also for the Kubo-Ando geometric mean on the positive definite cone of a general non-commutative $C^*$-algebra. 
\end{abstract}

\subjclass[2010]{Primary: 47A64, 47B49. Secondary: 26E60.} 
\keywords{Kubo-Ando mean, quasi-arithmetic mean, positive Hilbert space operators}
\maketitle

\section{Introduction}

Means play very important role in many parts of mathematics and also in several other branches of science. Most classically, means were considered for numbers but later for other objects, too. In this paper we are concerned with means on positive definite Hilbert space operators, especially with so-called quasi-arithmetic operator means and, even rather, with Kubo-Ando means. Both concepts cover in fact large classes of means and, in what follows, our aim is to present some characterizations for certain elements of them. Special attention is paid to the three most fundamental operator means: the arithmetic, harmonic and geometric means.

First we fix the notation and present the necessary definitions and information what we will use throughout the paper.

The theory of means for numbers is a classical and very well developed area in mathematical analyis, see Chapters II and III in the famous book \cite{HLP} by Hardy, Littlewood and P\'olya. One can say, very generally, that a mean on a subinterval $J$ of the real line is a (usually continuous) function $M:J\times J\to J$ with the simple property that 
$$\min\{t,s\}\leq M(t,s)\leq \max\{t,s\},\quad t,s\in J.$$ 
The most essential and important means are 
the arithmetic, harmonic and geometric means. They have a common generalization, the family of so-called quasi-arithmetic means. Let $J$ be a subinterval of $\R$ and $\varphi:J\to \R$ be a continuous injective function. Then the map $M_\varphi:J\times J\to J$ defined by
\begin{equation*}
M_\varphi(t,s)=\varphi^{-1}\ler{\frac{\varphi(t)+\varphi(s)}{2}},\quad t,s\in J
\end{equation*}
is called a quasi-arithmetic mean. Those means have a whole theory, their first systematic study was done in the classical work \cite{HLP}.

In what follows we will consider means on Hilbert space operators rather than on numbers. In the rest of the paper, let $H$ be a complex Hilbert space of dimension at least 2. Denote by $\Bh$ the algebra of all bounded linear operators on $H$ and by $\Bs$ the linear space of all self-adjoint elements of $\Bh$. The usual order on $\Bs$ (the one coming from the concept of positive semidefinitness) is denoted by $\leq$. The symbol $\Bp$ stands for the cone of all positive semidefinite operators in $\Bh$ and $\Bpp$ denotes the cone of all positive definite operators (operators which are positive semidefinite and invertible) in $\Bh$.

The probably most important notion and theory of means for matrices or linear operators is due to Kubo and Ando \cite{KubAnd80} and concerns positive (semidefinite or definite) Hilbert space operators. First, a binary operation $\sigma$ on $\Bp$ is said to be a connection if  the following requirements are fulfilled (from (a) to (c), all operators are supposed to belong to $\Bp$):

\begin{itemize}
\item[(a)] if $A\leq C$ and $B\leq D$, then $A\sigma B\leq C\sigma D$;
\item[(b)] $C(A\sigma B)C\leq (CAC)\sigma (CBC)$;
\item[(c)] if $A_n \downarrow A$ and $B_n \downarrow B$ strongly,
then $A_n\sigma B_n \downarrow A\sigma B$ strongly (the sign $\downarrow$ refers to monotone decreasing convergence meant in the usual order among self-adjoint operators).
\end{itemize}

If $I\sigma I=I$ holds too, then the connection $\sigma$ is called a Kubo-Ando mean. Operations as convex combination and order among connections are defined in a straightforward natural way.
By the celebrated result Theorem 3.2 in \cite{KubAnd80}, for infinite dimensional $H$, there is an affine order isomorphism from the class of all connections $\sigma$ on $\Bp$ onto the class of all operator monotone functions $f:]0,\infty [\to [0,\infty[$ given by the formula $f(t)I=I\sigma tI$, $t>0$. For invertible $A,B\in \Bp$, we have
\begin{equation}\label{E:funct}
A\sigma B=A^{\fel} f(A^\mfel BA^\mfel) A^{\fel}.
\end{equation}
Observe that these imply that Kubo-Ando means do not depend on the underlying spaces, they only depend on certain real functions.
By property (c) we obtain that the formula \eqref{E:funct} extends to any invertible $A\in \Bp$ and arbitrary $B\in \Bp$.
The most important Kubo-Ando means are the arithmetic mean with representing function $t\mapsto (1+t)/2$, the harmonic mean with representing function $t\mapsto (2t)/(1+t)$ and the geometric mean with representing function $t\mapsto \sqrt t$, $t>0$. For invertible $A,B\in \Bp$, those means of $A,B$ are in turn the following operators
\[
\frac{A+B}{2}, \quad 2(A^{-1}+B^{-1})^{-1},  \quad A^{\fel} (A^\mfel BA^\mfel)^{\fel} A^{\fel}.
\]

Below, whenever we write $\sigma, f$, we always mean that $\sigma$ is a Kubo-Ando mean and $f$ is its representing operator monotone function (which necessarily satisfies $f(1)=1$). We know from the theory of operator monotone functions that each such $f$ has a holomorphic extensions to the complex upper half plane. (In particular, it follows that $f$ is strictly positive and injective if not constant.) Moreover, $f$ is necessarily infinitely many times differentiable and $f$ is operator concave (in fact, this latter property is equivalent to the operator monotonicity). The transpose $\sigma'$ of the Kubo-Ando mean $\sigma$ is the mean
satisfying $A\sigma' B=B\sigma A$, $A,B\in \Bp$. Its representing function is $t\mapsto tf(1/t)$, $t>0$. The Kubo-Ando mean $\sigma$ is called symmetric if $\sigma'=\sigma$.
The adjoint $\sigma^*$ of $\sigma$ is the Kubo-Ando mean satisfying $A\sigma^* B=(A^{-1}\sigma B^{-1})^{-1}$ for all $A,B\in \Bpp$. Its representing function is $t\mapsto 1/f(1/t)$, $t>0$.

Beside the class of Kubo-Ando means, there are other types of operator means which are also important in particular because of their applications. One of them is
the Log-Euclidean mean which is defined in the following way
\begin{equation*}
\Mle(A,B)=\exp{(\log A+\log B)}, \quad A,B\in \Bpp.
\end{equation*}
This mean has very practical applications, e.g., in the field of
diffusion tensor magnetic resonance imaging (DT - MRI), see \cite{LEuclid}. Obviously, it is just a particular element of a general class of means what we may call quasi-arithmetic operator means. Namely, for a strictly monotone continuous function $\varphi:]0,\infty[\to \R$, define
\begin{equation}\label{E:QAM}
M_\varphi (A,B)=\varphi^{-1} \ler{\frac{\varphi(A)+\varphi(B)}{2}}, \quad A,B\in \Bpp.
\end{equation}
We note that here, just as above in relation with Kubo-Ando means, the image of an operator under a real function is meant in the sense of the continuous function calculus developed for normal elements in general $C^*$-algebras.

\section{Results}

We now turn to our results and first recall that our aim in this paper is to present some characterizations concerning Kubo-Ando means and quasi-arithmetic means on positive definite operators. We refer to \cite{Fujii} where an interesting characterization for the arithmetic and harmonic means can be found.
Our first statement concerns general quasi-arithmetic means. 
Assume that the function $\varphi$ in \eqref{E:QAM} maps onto $\R$. It is apparent that $M_\varphi$ has the following properties:
\begin{itemize}
\item{} $M_\varphi(A,A)=A$ is valid for all $A\in \Bpp$;
\item{} $M_\varphi (A,B)=M_\varphi (B,A)$ holds for any $A,B\in \Bpp$;
\item{} for an arbitrary given $B\in \Bpp$, the map $A\mapsto M_\varphi (A,B)$ is a bijection from $\Bpp$ onto itself and it preserves the order in both directions, i.e., 
\begin{equation*}
\varphi(A)\leq \varphi(A') \Longleftrightarrow 
\varphi(M_\varphi (A,B))\leq \varphi(M_\varphi (A',B))
\end{equation*}
holds for any $A,A'\in \Bpp$.
\end{itemize}
Now, the probably somewhat surprising fact is that the above properties completely characterize $M_\varphi$.  Observe that such a characterization would totally fail in the case of numbers (i.e., where the underlying Hilbert space is 1-dimensional) due to the fact that, in the case of increasing $\varphi$, the inequality $\varphi(t)\leq \varphi(s)$ does not tell much, only that $t\leq s$ holds (and this is independent from $\varphi$).

The precise statement of our first result reads as follows.

\begin{theorem}\label{T:QA}
Let $\varphi:]0,\infty [\to \R$ be a surjective continuous strictly monotone function. Assume that $M:\Bpp \times \Bpp\to \Bpp$ is a map with the following properties:
\begin{itemize}
\item[(i)] $M(A,A)=A$ is valid for all $A\in \Bpp$;
\item[(ii)] $M(A,B)=M(B,A)$ holds for any $A,B\in \Bpp$;
\item[(iii)] for any given $B\in \Bpp$, the map $A\mapsto M(A,B)$ is a bijection from $\Bpp$ onto itself with the property that
\begin{equation*}
\varphi(A)\leq \varphi(A') \Longleftrightarrow 
\varphi(M(A,B))\leq \varphi(M(A',B))
\end{equation*}
holds for any $A,A'\in \Bpp$.
\end{itemize}
Then 
we have
\begin{equation*}
M(A,B)=M_\varphi (A,B)=\varphi^{-1}\ler{\frac{\varphi(A)+\varphi(B)}{2}}, \quad A,B\in \Bpp.
\end{equation*}
\end{theorem}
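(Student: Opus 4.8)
The plan is to transfer the problem from the positive definite cone to the full self-adjoint part $\Bs$ by means of the functional calculus, reduce it to a statement about order automorphisms of $\Bs$, and then exploit the two remaining hypotheses. Since $\varphi:\,]0,\infty[\to\R$ is a surjective continuous strictly monotone function it is a homeomorphism, and one checks that $A\mapsto\varphi(A)$ is a bijection of $\Bpp$ onto $\Bs$ with inverse $X\mapsto\varphi^{-1}(X)$; indeed, if $X\in\Bs$ then the spectrum of $X$ is compact, so $\varphi^{-1}$ maps it into a compact subset of $\,]0,\infty[$, whence $\varphi^{-1}(X)\in\Bpp$. I would then define $N:\Bs\times\Bs\to\Bs$ by $N(X,Y)=\varphi\big(M(\varphi^{-1}(X),\varphi^{-1}(Y))\big)$. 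Hypotheses (i)--(iii) translate into: $N(X,X)=X$ for all $X$; $N(X,Y)=N(Y,X)$ for all $X,Y$; and for every fixed $Y$ the map $X\mapsto N(X,Y)$ is a bijection of $\Bs$ onto itself with $X\le X'\iff N(X,Y)\le N(X',Y)$ for all $X,X'$. It then suffices to prove that $N(X,Y)=(X+Y)/2$, for then $M(A,B)=\varphi^{-1}\big(N(\varphi(A),\varphi(B))\big)=\varphi^{-1}\big((\varphi(A)+\varphi(B))/2\big)=M_\varphi(A,B)$.

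The decisive ingredient is the known description of the order automorphisms of $\Bs$: since $\dim H\ge 2$, every bijection of $\Bs$ preserving $\le$ in both directions is of the form $X\mapsto TXT^*+S$ with $T\in\Bh$ invertible (linear or conjugate-linear) and $S\in\Bs$. Applying this to $X\mapsto N(X,Y)$ for each fixed $Y$, there are an invertible $T_Y$ and an $S_Y\in\Bs$ with $N(X,Y)=T_YXT_Y^*+S_Y$. The fixed-point condition $N(Y,Y)=Y$ then forces $S_Y=Y-T_YYT_Y^*$, so that
\begin{equation*}
N(X,Y)=T_Y(X-Y)T_Y^*+Y,\qquad X,Y\in\Bs,
\end{equation*}
and the symmetry $N(X,Y)=N(Y,X)$ rewrites (using that $W\mapsto T_ZWT_Z^*$ is additive and odd) as $T_Y(X-Y)T_Y^*+T_X(X-Y)T_X^*=X-Y$ for all $X,Y$.

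Next I would pin down the operators $T_Y$. Taking $X=Y+P$ with $P$ the orthogonal projection onto $\mathbb C\xi$ for a unit vector $\xi$, the last identity becomes $T_YPT_Y^*+T_{Y+P}PT_{Y+P}^*=P$. Both summands are positive operators of rank at most one, so by comparing ranges and traces with those of the rank-one operator $P$ one obtains that the range of $T_YPT_Y^*$ lies in $\mathbb C\xi$, i.e. $T_Y\xi\in\mathbb C\xi$. As this holds for every unit $\xi$, the operator $T_Y$ sends each vector to a scalar multiple of itself; using $\dim H\ge 2$ together with the invertibility of $T_Y$, this rules out the conjugate-linear case and gives $T_Y=\l_YI$ with $\l_Y\ne 0$. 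Hence $T_Y(X-Y)T_Y^*=c_Y(X-Y)$ with $c_Y=|\l_Y|^2>0$ and $N(X,Y)=c_Y(X-Y)+Y$. Substituting back into the symmetry relation gives $(c_X+c_Y-1)(X-Y)=0$ for all $X,Y$; choosing $X\ne Y$ forces $c_X+c_Y=1$, and then $X=Y$ gives $c_X=1/2$ for every $X$. Therefore $N(X,Y)=\tfrac12(X-Y)+Y=(X+Y)/2$, which finishes the argument.

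The main obstacle is really the structure theorem for order automorphisms of $\Bs$ invoked in the second step; once that is available, the rest is elementary, the only mildly delicate point being the rank-one perturbation trick showing that the conjugating operators are scalar multiples of the identity (this is also where the hypothesis $\dim H\ge 2$ enters, matching the remark that the statement fails for numbers). Some care is also needed in the reduction step to make sure the domains and ranges for the functional calculus are exactly as claimed, but this is routine.
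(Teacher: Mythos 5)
Your proof is correct, and while it shares the paper's backbone, it finishes differently. The first half coincides with the paper: transfer everything to $\Bs$ via the functional calculus of $\varphi$, invoke the structure theorem for order automorphisms of $\Bs$ (Theorem 2 in \cite{ML01f}) for each map $X\mapsto N(X,Y)$, and normalize with the fixed point $N(Y,Y)=Y$ to get $N(X,Y)=T_Y(X-Y)T_Y^*+Y$. From there the paper takes a longer route: it exploits the symmetry through a midpoint computation with $(S+T)/2$, applies Lemma 7 of \cite{ML11a} to conclude all the operators $T_Y$ are scalar multiples of a single fixed operator, then uses rank-one differences to see that operator is a multiple of $I$, and finally needs a continuity argument to upgrade the relation $(\tau(A)+\tau(B))|c|^2=1$, known only for $A\neq B$, to the constant $1/2$. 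You instead plug $X=Y+P$ ($P$ a rank-one projection) directly into the symmetry identity $T_Y(X-Y)T_Y^*+T_X(X-Y)T_X^*=X-Y$; since both summands are positive and dominated by $P$, each is a positive multiple of $P$, which forces $T_Y\xi\in\mathbb{C}\xi$ for every unit $\xi$ and hence $T_Y=\lambda_Y I$ for each $Y$ separately. This bypasses both the midpoint computation, the linear-dependence lemma from \cite{ML11a}, and the continuity step, and is genuinely leaner. Two small touch-ups: the phrase ``then $X=Y$ gives $c_X=1/2$'' is not licensed as written, because $c_X+c_Y=1$ was derived only for $X\neq Y$; instead compare three distinct elements ($c_X+c_Y=c_X+c_Z=c_Y+c_Z=1$ yields $c_X=1/2$). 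And the exclusion of the conjugate-linear case, which you correctly attribute to $\dim H\ge 2$ and invertibility, deserves its one-line justification: if $T\xi=\lambda_\xi\xi$ for all $\xi$ with $T$ conjugate-linear, then comparing $T(i\xi)=-i\lambda_\xi\xi$ with the eigenvalue relation for $i\xi$ and with a vector independent of $\xi$ forces $\lambda_\xi=0$, contradicting invertibility. With these minor repairs the argument is complete.
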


\begin{proof}
In the proof we will use ideas from the proof of Theorem 8 in \cite{ML11a} what we develop further here.
Assume that $\varphi:]0,\infty [\to \R$ is a surjective  continuous strictly monotone function, and $M:\Bpp \times \Bpp\to \Bpp$ satisfies the properties (i)-(iii) above. For an arbitrary fixed $B\in \Bpp$, set
\[
\psi_B(A)=M(A,B), \quad A\in \Bpp.
\]
We know that $\psi_B:\Bpp \to \Bpp$ is a bijection and 
\begin{equation*}
\varphi(A)\leq \varphi(A') \Longleftrightarrow \varphi(\psi_B(A))\leq \varphi(\psi_B(A'))
\end{equation*}
holds for any $A,A'\in \Bpp$. It then follows that the map $\varphi\circ \psi_B\circ \varphi^{-1}$ is an order automorphism of $\Bs$, i.e., it is a bijection which preserves the order in both directions. The structure of those maps is known and was given in Theorem 2 in \cite{ML01f}. By that result, we have an invertible bounded linear or conjugate-linear operator $X_B$ on $H$ and an element $Y_B\in \Bs$ such that
\begin{equation*}
\varphi(\psi_B(\varphi^{-1}(T)))=X_B T X_B^*+Y_B, \quad T\in \Bs,
\end{equation*}
or, equivalently, 
\begin{equation}\label{E:11}
\varphi(M(A,B))=\varphi(\psi_B(A))=X_B \varphi(A)X_B^*+Y_B, \quad A\in \Bpp.
\end{equation}
Since $\psi_B(B)=M(B,B)=B$, we get that
\begin{equation*}
\varphi(B)=X_B \varphi(B)X_B^*+Y_B
\end{equation*}
and this implies that
\begin{equation*}
Y_B=\varphi(B)-X_B \varphi(B)X_B^*.
\end{equation*}
Hence by \eqref{E:11} we have
\begin{equation}\label{E:13}
\varphi(M(A,B))=\varphi(\psi_B(A))=X_B (\varphi(A)-\varphi(B))X_B^*+\varphi(B), \quad A\in \Bpp.
\end{equation}
Selecting arbitrary $R,S\in \Bs$ and using \eqref{E:13}, we deduce
\begin{equation*}\label{E:12}
\varphi(M(\varphi^{-1}(S),\varphi^{-1}(R)))=X_{\varphi^{-1}(R)} (S-R)X_{\varphi^{-1}(R)}^*+R.
\end{equation*}
Applying this, for any $R,S,T\in \Bs$ we compute
\begin{equation}\label{E:14}
\begin{gathered}
\varphi\ler{M\ler{\varphi^{-1}\ler{\frac{S+T}{2}},\varphi^{-1}(R)}}
=X_{\varphi^{-1}(R)} \ler{ \frac{S+T}{2}-R} X_{\varphi^{-1}(R)}^*+R
\\=
\frac{1}{2} \ler{( X_{\varphi^{-1}(R)} \bigl(S -R\bigr)X_{\varphi^{-1}(R)}^*+R)+( X_{\varphi^{-1}(R)} \bigl(T -R\bigr)X_{\varphi^{-1}(R)}^*+R)}\\
=\frac{1}{2}(\varphi(M(\varphi^{-1}(S),\varphi^{-1}(R))+\varphi(M(\varphi^{-1}(T),\varphi^{-1}(R))).
\end{gathered}
\end{equation}
By the symmetry property (ii) of $M$, we can continue this as
\begin{equation}\label{E:15}
\begin{gathered}
=
\frac{1}{2}(\varphi(M(\varphi^{-1}(R),\varphi^{-1}(S))+\varphi(M(\varphi^{-1}(R),\varphi^{-1}(T)))\\=
\frac{1}{2} \ler{( X_{\varphi^{-1}(S)} \bigl(R -S\bigr)X_{\varphi^{-1}(S)}^*+S)+( X_{\varphi^{-1}(T)} \bigl(R -T\bigr)X_{\varphi^{-1}(T)}^*+T)}.
\end{gathered}
\end{equation}
Similarly, we have
\begin{equation}\label{E:16}
\begin{gathered}
\varphi\ler{M\ler{\varphi^{-1}\ler{\frac{S+T}{2}},\varphi^{-1}(R)}}=
\varphi\ler{M\ler{\varphi^{-1}(R),\varphi^{-1}\ler{\frac{S+T}{2}}}}\\=
X_{\varphi^{-1}\ler{\frac{S+T}{2}}} \ler{R-\frac{S+T}{2}}X_{\varphi^{-1}\ler{\frac{S+T}{2}}}^*+\frac{S+T}{2}.
\end{gathered}
\end{equation}
Comparing \eqref{E:14}, \eqref{E:15} and \eqref{E:16}, we obtain that
\begin{equation*}
\begin{gathered}
\frac{1}{2} \ler{(X_{\varphi^{-1}(S)} \bigl(R -S\bigr)X_{\varphi^{-1}(S)}^*)+( X_{\varphi^{-1}(T)} \bigl(R -T\bigr)X_{\varphi^{-1}(T)}^*)}\\=
X_{\varphi^{-1}\ler{\frac{S+T}{2}}} \ler{R-\frac{S+T}{2}}X_{\varphi^{-1}\ler{\frac{S+T}{2}}}^*.
\end{gathered}
\end{equation*}
Fixing $S,T\in \Bs$ and writing $R+S$ in the place of $R$, we have
\begin{equation}\label{E:17}
 X_{\varphi^{-1}(S)} R X_{\varphi^{-1}(S)}^*+ X_{\varphi^{-1}(T)} (R +(S-T))X_{\varphi^{-1}(T)}^*=
X_{\varphi^{-1}\ler{\frac{S+T}{2}}} (2R+ (S-T))X_{\varphi^{-1}\ler{\frac{S+T}{2}}}^*.
\end{equation}
Choosing $R=0$, we see
\begin{equation*}
X_{\varphi^{-1}(T)} (S-T)X_{\varphi^{-1}(T)}^*=
X_{\varphi^{-1}\ler{\frac{S+T}{2}}} (S-T)X_{\varphi^{-1}\ler{\frac{S+T}{2}}}^*
\end{equation*}
and then we obtain from \eqref{E:17} that
\begin{equation*}
X_{\varphi^{-1}(S)} R X_{\varphi^{-1}(S)}^*+ X_{\varphi^{-1}(T)} R X_{\varphi^{-1}(T)}^*=
X_{\varphi^{-1}\ler{\frac{S+T}{2}}} (2R) X_{\varphi^{-1}\ler{\frac{S+T}{2}}}^*
\end{equation*}
holds for all $R\in \Bs$. This identity implies a strong connection between $X_{\varphi^{-1}(S)},  X_{\varphi^{-1}(T)}$.
Indeed, by Lemma 7 in \cite{ML11a}, it follows that 
$X_{\varphi^{-1}(S)},  X_{\varphi^{-1}(T)}$ are necessarily linearly dependent for any $S,T\in \Bs$.
This gives us that there is an invertible bounded linear or conjugate-linear operator $X$ on $H$ and a scalar valued function $t:\Bpp \to \mathbb C$ such that
\begin{equation*}
X_A=t(A)X, \quad A\in \Bpp.
\end{equation*}
Therefore, with $\tau(B)=|t(B)|^2>0$, $B\in \Bpp$ and applying \eqref{E:13}, we obtain that
\begin{equation}\label{E:21}
\varphi(M(A,B))=\tau(B)X(\varphi(A)-\varphi(B))X^*+\varphi(B), \quad A,B\in \Bpp.
\end{equation}
Using the symmetry property (ii) of $M$, we have
\begin{equation*}
\tau(B)X(\varphi(A)-\varphi(B))X^*+\varphi(B)=
\tau(A)X(\varphi(B)-\varphi(A))X^*+\varphi(A), \quad A,B\in \Bpp.
\end{equation*}
This yields
\begin{equation}\label{E:20}
(\tau(A)+\tau(B))X(\varphi(A)-\varphi(B))X^*=
\varphi(A)-\varphi(B), \quad A,B\in \Bpp.
\end{equation}
The operator $\varphi(A)-\varphi(B)$ can be equal to any rank-one element of $\Bs$. It then follows easily that for every vector $h\in H$, $Xh$ is a scalar multiple of $h$. This readily implies that $X$ is necessarily a constant multiple of the identity, i.e., $X=cI$ holds for some non-zero $c\in \mathbb C$.
We then deduce from \eqref{E:20} that $(\tau(A)+\tau(B))|c|^2=1$ is valid for any pair of different elements $A,B\in \Bpp$. 
Moreover, we learn from \eqref{E:11} that 
\begin{equation*}
M(A,B)=\varphi^{-1}(X_B \varphi(A)X_B^*+Y_B), \quad A\in \Bpp
\end{equation*}
which shows that $M$ is continuous in its first variable implying, by symmetry, that it is continuous also in its second variable. It follows from \eqref{E:21} that 
\begin{equation}\label{E:21a}
\varphi(M(A,B))=\tau(B)|c|^2(\varphi(A)-\varphi(B))+\varphi(B), \quad A,B\in \Bpp
\end{equation}
and then we can easily deduce that
$\tau$ is continuous (in the operator norm topology). Consequently,
$(\tau(A)+\tau(B))|c|^2=1$ holds for all $A,B\in \Bpp$ (and not only for different $A,B$ what we already know). It follows that $\tau(B)|c|^2=1/2$, $B\in \Bpp$, and then we obtain from  \eqref{E:21a} that 
\begin{equation*}
\varphi(M(A,B))=\frac{1}{2}\ler{\varphi(A)-\varphi(B)} +\varphi(B),
\end{equation*}
or, equivalently, that
\begin{equation*}
M(A,B)=\varphi^{-1}\ler{\frac{\varphi(A)+\varphi(B)}{2}}, \quad A,B\in \Bpp.
\end{equation*}
This completes the proof of the theorem. 
\end{proof}

As a trivial corollary, we have the following characterization of the Log-Euclidean mean. We recall that the chaotic order $\ll$ on $\Bpp$ is defined as follows: for any $A,B\in \Bpp$, we have $A\ll B$ if and only if $\log A\leq \log B$.

\begin{corollary}\label{C:QA}
Assume that $M:\Bpp \times \Bpp\to \Bpp$ is a map with the following properties:
\begin{itemize}
\item[(i)] $M(A,A)=A$ for any $A\in \Bpp$;
\item[(ii)] $M(A,B)=M(B,A)$ holds for all $A,B\in \Bpp$;
\item[(iii)] for any given $B\in \Bpp$, the transformation $A\mapsto M(A,B)$ is a bijective map from $\Bpp$ onto itself and it is a chaotic order automorphism meaning that
\begin{equation*}
A\ll A' \Longleftrightarrow 
M(A,B)\ll M(A',B)
\end{equation*}
holds for all $A,A'\in \Bpp$.
\end{itemize}
Then we have
\begin{equation*}
M(A,B)=\exp\ler{\frac{\log A+\log B}{2}}, \quad A,B\in \Bpp.
\end{equation*}
\end{corollary}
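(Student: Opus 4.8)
The plan is to simply specialize Theorem~\ref{T:QA} to the function $\varphi=\log$. The first (and essentially only) thing to check is that $\log$ qualifies: the map $\log:\,]0,\infty[\,\to\R$ is continuous, strictly increasing, and surjective, hence it is an admissible choice of $\varphi$ in Theorem~\ref{T:QA}.

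Next I would translate the three hypotheses. Properties (i) and (ii) of the corollary coincide word for word with properties (i) and (ii) of Theorem~\ref{T:QA}. For property (iii), recall that by the very definition of the chaotic order we have $A\ll A'$ if and only if $\log A\leq \log A'$, that is, $\varphi(A)\leq\varphi(A')$; and likewise $M(A,B)\ll M(A',B)$ if and only if $\varphi(M(A,B))\leq\varphi(M(A',B))$. Therefore the stated requirement that $A\mapsto M(A,B)$ be a bijection of $\Bpp$ which is a chaotic order automorphism is precisely the requirement in (iii) of Theorem~\ref{T:QA} with $\varphi=\log$.

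Consequently Theorem~\ref{T:QA} applies and gives
\begin{equation*}
M(A,B)=\varphi^{-1}\ler{\frac{\varphi(A)+\varphi(B)}{2}}=\exp\ler{\frac{\log A+\log B}{2}},\qquad A,B\in\Bpp,
\end{equation*}
which is the assertion. I do not expect any genuine obstacle here: all the work has been done in Theorem~\ref{T:QA}, and the corollary is obtained merely by recognizing the chaotic order as the usual order transported through the (surjective, strictly monotone, continuous) logarithm.
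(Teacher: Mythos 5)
Your proof is correct and is exactly the argument the paper intends: the corollary is stated there as an immediate specialization of Theorem~\ref{T:QA} to $\varphi=\log$, using that the chaotic order is by definition the usual order on $\Bs$ transported through $\log$. Nothing is missing.
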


For the sake of curiosity, we present the following negative result. We once again recall that in this paper $H$ is a Hilbert space of dimension at least 2.

\begin{proposition}\label{P:nonex}
There is no such map $M:\Bpp \times \Bpp\to \Bpp$ which has the following properties:
\begin{itemize}
\item[(i)] $M(A,A)=A$ for any $A\in \Bpp$;
\item[(ii)] $M(A,B)=M(B,A)$ holds for all $A,B\in \Bpp$;
\item[(iii)] for any given $B\in \Bpp$, the transformation $A\mapsto M(A,B)$ is a bijective map from $\Bpp$ onto itself which is an order automorphism, i.e.,
\begin{equation*}
 A\leq  A' \Longleftrightarrow 
 M(A,B)\leq M(A',B)
\end{equation*}
holds for any $A,A'\in \Bpp$.
\end{itemize}
\end{proposition}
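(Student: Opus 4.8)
The plan is to argue by contradiction, following the strategy of the proof of Theorem~\ref{T:QA} but now working directly with the usual order on $\Bpp$. Here there is no surjective $\varphi:\,]0,\infty[\to\R$ available to transport the problem to the linear space $\Bs$, so instead I would invoke the known description of the order automorphisms of the cone $\Bpp$ itself. Suppose then that $M:\Bpp\times\Bpp\to\Bpp$ satisfies (i)--(iii), and for a fixed $B\in\Bpp$ put $\psi_B(A)=M(A,B)$. By property (iii) each $\psi_B$ is an order automorphism of $\Bpp$.

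The first key step is to recall the structure theorem for such maps, valid precisely because $\dim H\ge 2$ and obtained in the same circle of ideas as the result quoted for $\Bs$: every order automorphism of $\Bpp$ has the form $A\mapsto XAX^*$ for some bounded invertible linear or conjugate-linear operator $X$ on $H$. Thus for each $B\in\Bpp$ there is such an $X_B$ with $M(A,B)=\psi_B(A)=X_BAX_B^*$ for all $A\in\Bpp$, and property (i) applied to $B$ gives $X_BBX_B^*=M(B,B)=B$ for every $B$.

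The second step specializes this to scalar operators. For $t>0$ the identity $X_{tI}(tI)X_{tI}^*=tI$ forces $X_{tI}X_{tI}^*=I$, i.e.\ $X_{tI}$ is unitary or anti-unitary, so $\psi_{tI}$ fixes every scalar operator and hence $M(sI,tI)=X_{tI}(sI)X_{tI}^*=sI$ for all $s,t>0$. Now the symmetry property (ii) gives, for arbitrary $s,t>0$,
\[
sI=M(sI,tI)=M(tI,sI)=tI ,
\]
which is absurd (take $s=1$, $t=2$); this contradiction proves the proposition.

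I expect the entire weight of the argument to sit in the appeal to the structure of order automorphisms of $\Bpp$; once that is available, the rest is a one-line computation with scalar operators, and this is also the one place where $\dim H\ge 2$ enters — for a one-dimensional $H$ every strictly increasing bijection of $]0,\infty[$ is such an order automorphism and many maps $M$ with (i)--(iii) then exist. A minor technical point is that the $X_B$ may be conjugate-linear, but since positive scalars are unaffected by conjugation this does not disturb the computation.
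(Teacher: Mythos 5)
Your argument is correct, and its first (and decisive) step is the same as the paper's: for each fixed $B$ the map $\psi_B$ is an order automorphism of $\Bpp$, so by Moln\'ar's structure theorem (Theorem 1 in the cited work on order automorphisms of positive definite operators, valid since $\dim H\geq 2$) we have $M(A,B)=X_BAX_B^*$ with $X_B$ invertible linear or conjugate-linear. Where you diverge is in how the contradiction is extracted. The paper never uses property (i): it observes that each $\psi_B$ is additive, compares $M(C,A+B)$ with $M(A,C)+M(B,C)$ via symmetry, invokes the auxiliary linear-dependence lemma (Lemma 7 of the same reference, also used in Theorem \ref{T:QA}) to write $M(A,B)=t(B)XAX^*$, and then symmetry forces every pair $A,B$ to be linearly dependent, an absurdity. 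You instead use (i) only at scalar operators: $X_{tI}(tI)X_{tI}^*=tI$ gives $X_{tI}X_{tI}^*=I$, hence $M(sI,tI)=sI$, and symmetry yields $sI=tI$ for all $s,t>0$. Your finish is shorter and avoids both the additivity computation and the linear-dependence lemma (and your handling of the conjugate-linear case and of the one-dimensional counterexamples is right); the price is that you genuinely need hypothesis (i), whereas the paper's route shows the slightly stronger fact that conditions (ii) and (iii) alone are already contradictory.
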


This result may look a bit surprising in the view of the fact that in the one-dimensional case (where $\Bpp$ is identified with the positive half-line) maps which satisfy the above requirements trivially exist, consider, e.g., $M(t,s)=\sqrt{ts}$, $t,s>0$.

\begin{proof}[Proof of Proposition \ref{P:nonex}]
In the light of the proof of Theorem \ref{T:QA}, our argument  is easy. Assume on the contrary that $M$ with the prescribed properties in the proposition does exist. Then, just as in the proof of Theorem \ref{T:QA}, for any fixed  $B\in \Bpp$, we set
\[
\psi_B(A)=M(A,B), \quad A\in \Bpp.
\]
It follows that $\psi_B$ is an order automorphism of $\Bpp$ with respect to the usual order. The structure of such maps was given in \cite{ML11a}. Theorem 1 there tells us that we have an invertible bounded linear or conjugate-linear operator $X_B$ on $H$ such that
\begin{equation*}
M(A,B)=\psi_B(A)=X_B A X_B^*, \quad A\in \Bpp.
\end{equation*}
This means that $M$ is additive in its first (and, by the symmetry, also in its second) variable. Now, computing
\[
M(C,A+B)=M(A+B,C)=M(A,C)+M(B,C)=M(C,A)+M(C,B), \quad A,B,C\in \Bpp,
\]
we obtain that for any given $A,B\in \Bpp$, the equality
\[
X_{A+B}CX_{A+B}=X_A C {X_A}^*+X_B C {X_B}^*
\]
holds for all $C\in \Bpp$ and then it holds also for all elements of $\Bs$ ($\Bpp$ linearly generates $\Bs$). As in the proof of Theorem \ref{T:QA}, we obtain that $X_A$, $X_B$ are linearly dependent for all $A,B\in \Bpp$. We then have an invertible bounded linear or conjugate-linear operator $X$ on $H$ and a scalar function $t$ on $\Bpp$ with positive values such that
\[
M(A,B)=t(B)XAX^*, \quad A,B\in \Bpp.
\]
By the symmetry of $M$, we have $t(B)A=t(A)B$, i.e., $A,B$ are linear dependent for all $A,B\in \Bpp$ which is an obvious  contradiction. This completes the proof of the statement.
\end{proof}

Observe that the key tools in the proofs of our statements Theorem \ref{T:QA} and Proposition \ref{P:nonex} were the  structural results on the (non-linear) order automorphisms of $\Bs$ and $\Bpp$, respectively.

In the rest of the paper we deal with Kubo-Ando means. At this point, it is very natural to ask how much do those means  differ from the previously considered quasi-arithmetic operator means. The elements of the intersection of those two families can be easily identified. Namely,
the only Kubo-Ando means which are also quasi-arithmetic operator means are the arithmetic mean and the harmonic mean. In fact, this follows easily from the much stronger result Corollary \ref{C:10} below.

One of the ideas behind quasi-arithmetic means is that in that way we can create new and useful means from the so fundamental and simple arithmetic mean. How does this idea would work for other Kubo-Ando means in the place of the arithmetic mean? Could it happen that, in the case of a given mean, we cannot produce any new Kubo-Ando mean in that way? The following result and the proceeding remark show that these questions can lead to a characterization of the geometric mean as well as to a characterization of commutative $C^*$-algebras. As for the possibly  unexpected appearance of $C^*$-algebras here, we recall that any $C^*$-algebra is isometrically and isomorphically embedded into some Hilbert space operator algebra $B(H)$ as a normed closed *-subalgebra containing the identity (in this paper any $C^*$-algebra is assumed to be unital). Therefore, any $C^*$-algebra can be viewed as a subalgebra of some $\Bh$ and hence it has sense to speak about Kubo-Ando means on the positive definite cone of an arbitrary $C^*$-algebra. (We also recall that in the introduction we already mentioned that Kubo-Ando means do not depend on underlying spaces, they depend only on operator monotone real functions.) If $\A$ is a $C^*$-algebra, we denote by $\App$ its positive definite cone (set of all positive invertible elements in $\A$). Our result reads as follows.

\begin{theorem}\label{T:Ca}
Let $\sigma$ be a symmetric Kubo-Ando mean. For a non-commutative $C^*$-algebra $\A$, the following conditions are equivalent:
\begin{itemize}
\item[(i)] $\sigma$ is the geometric mean;
\item[(ii)] there is no Kubo-Ando mean $\tau$ different from $\sigma$ of the form
\begin{equation}\label{E:veg}
A\tau B=\varphi^{-1}(\varphi(A)\sigma \varphi(B)),\quad A,B\in \App 
\end{equation}
with some continuous strictly monotone function $\varphi:]0,\infty[\to ]0,\infty[$.
\end{itemize} 
\end{theorem}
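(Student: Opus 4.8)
The plan is to prove the two implications separately. The direction (i)$\Rightarrow$(ii) should be the shorter one: suppose $\sigma$ is the geometric mean, so $A\sigma B=A^{1/2}(A^{-1/2}BA^{-1/2})^{1/2}A^{1/2}$, and suppose $\tau$ defined by \eqref{E:veg} with some continuous strictly monotone $\varphi:]0,\infty[\to]0,\infty[$ is again a Kubo-Ando mean. I would evaluate \eqref{E:veg} on commuting pairs, in particular on scalars $A=sI$, $B=tI$ (or use the representing-function formula $f(t)I=I\tau tI$), to obtain a functional equation for $\varphi$ on the positive half-line: writing $g=\varphi$, the geometric mean of scalars is the usual $\sqrt{st}$, so $\tau$ restricted to scalars is $(s,t)\mapsto g^{-1}(\sqrt{g(s)g(t)})$, which must itself be an operator (hence in particular a scalar) mean with representing function $f_\tau$. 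The point is then that $\tau$ must also respect the congruence-compatibility and operator-monotonicity packaged in the Kubo--Ando axioms, and I expect that forcing \eqref{E:veg} to hold for \emph{all} $A,B\in\App$ (not just commuting ones) pins down $\varphi$ to be a power function $t\mapsto t^c$ up to an affine reparametrization; since $t\mapsto t^c$ commutes with the geometric-mean formula (because $(X^{1/2}YX^{1/2})^{1/2}$ behaves well under the functional calculus only for very special functions), one gets $\tau=\sigma$. The cleanest way is probably: show the map $A\mapsto \varphi(A)$ must intertwine $\sigma$ with a Kubo-Ando mean, use the uniqueness in \eqref{E:funct}, and reduce to a scalar functional equation $g^{-1}(\sqrt{g(s)g(t)})=$ a Kubo-Ando scalar mean whose only solutions with $g$ monotone give $g(t)=\alpha t^{c}$ or similar, all yielding $\tau=\sigma$.

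For the contrapositive of (ii)$\Rightarrow$(i) — i.e. if $\sigma$ is symmetric but \emph{not} the geometric mean, then there does exist a genuinely different $\tau$ of the form \eqref{E:veg} — the natural candidate is to take $\varphi(t)=t^p$ for a suitable exponent $p\neq 1$, or more generally $\varphi=$ a power, and check that $A\tau B:=\varphi^{-1}(\varphi(A)\sigma\varphi(B))$ is again a Kubo-Ando mean while being different from $\sigma$. Here the non-commutativity of $\A$ is essential and is exactly where the hypothesis is used: on a commutative algebra every such $\tau$ collapses back onto scalar-mean behaviour and one cannot always distinguish, but on a non-commutative algebra (which contains a copy of $M_2(\mathbb{C})$, or at least two non-commuting positive invertibles) the map $A\mapsto A\tau B$ genuinely differs from $A\mapsto A\sigma B$ unless $f=\sqrt{\cdot}$. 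I would verify the three Kubo-Ando axioms (a)--(c) for $\tau$: monotonicity and strong continuity transfer from $\sigma$ provided $\varphi$ and $\varphi^{-1}$ are operator monotone, so one must choose $\varphi(t)=t^p$ with $0<p\le 1$ (operator monotone) whose inverse $t^{1/p}$ is generally \emph{not} operator monotone — so axiom (a) is the delicate one and forces care. The honest route is: use that $\sigma$ with representing function $f$ equals the geometric mean iff $f(t)=\sqrt t$ iff $tf(1/t)=f(t)$ together with the multiplicativity $f(st)=f(s)f(t)$ holds, and then show that if $f\neq\sqrt{\cdot}$ one can build $\tau$ via a power $\varphi$ whose associated representing function $t\mapsto \varphi^{-1}\big(\varphi(1)\,f\big(\varphi(t)/\varphi(1)\big)\big)$ is still operator monotone yet differs from $f$ — the freedom coming from the one-parameter family of powers.

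The main obstacle, as flagged, is axiom (a) for the constructed $\tau$ in the hard direction: one needs $\varphi^{-1}$ applied to an operator-monotone-in-$(A,B)$ expression to remain monotone, and $\varphi^{-1}$ is typically not operator monotone, so a crude argument fails. The way around it is to observe that the expression $\varphi(A)\sigma\varphi(B)$ is itself of the form (value of an operator monotone function at a single transformed operator) via \eqref{E:funct}, so that $A\tau B=\varphi^{-1}\big(\varphi(A)^{1/2}\,f(\varphi(A)^{-1/2}\varphi(B)\varphi(A)^{-1/2})\,\varphi(A)^{1/2}\big)$, and for $\varphi(t)=t^p$ this collapses — using $\varphi(A)^{1/2}=A^{p/2}$ and the functional calculus — into $A\tau B=A^{1/2}g(A^{-1/2}BA^{-1/2})A^{1/2}$ for an explicitly computable operator monotone $g$ (built from $f$ and the power $p$) precisely when the algebra permits the manipulation, i.e. when the various $A^{p/2}$, $B^{p/2}$ interact correctly — and the resulting $g$ satisfies $g(1)=1$ and is operator monotone, so $\tau$ is a bona fide Kubo-Ando mean. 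One then checks $g\neq f$ (equivalently $\tau\neq\sigma$) unless $f(t)=\sqrt{t}$, using that the only operator monotone solution of the relevant conjugation-invariance is the square root, which is exactly the characterization of the geometric mean as the unique $\sigma$ with $\sigma=\sigma'=\sigma^*$. I would also double-check the symmetry hypothesis on $\sigma$ is genuinely needed (it ensures $\tau$ inherits symmetry, and it narrows the class so that the scalar functional equation in the easy direction has only the power solutions), and handle the degenerate case $f$ constant separately since then $\sigma$ is a trivial mean and the statement is vacuous or immediate.
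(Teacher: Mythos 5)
Your proposal has two genuine gaps, and the more serious one is in the direction (i)$\Rightarrow$(ii). The scalar reduction does not ``pin down'' $\varphi$ the way you claim: setting $\psi=\log\circ\varphi$, the equation $\psi^{-1}\bigl((\psi(t)+\psi(s))/2\bigr)=tf(s/t)$ only says that the quasi-arithmetic mean generated by $\psi$ is homogeneous, and the classical solution (Hardy--Littlewood--P\'olya, Acz\'el) allows \emph{two} families: $\psi(t)=a\log t+b$ (giving $f(t)=\sqrt t$, hence $\tau=\sigma$) and $\psi(t)=at^p+b$, i.e.\ $\varphi(t)=\exp(at^p+b)$, giving $f(t)=\bigl((1+t^p)/2\bigr)^{1/p}$, which is a perfectly legitimate operator monotone function for every nonzero $p\in[-1,1]$ (Besenyei--Petz). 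So at the scalar level nothing forces $\tau=\sigma$; eliminating the power family is the real content of the implication, and it \emph{must} use non-commutativity of $\A$, since $\varphi(t)=e^t$ transforms the geometric mean into the arithmetic mean on any commutative $C^*$-algebra --- yet your sketch invokes non-commutativity only in the other implication, so as written it cannot be correct. The paper's argument at this point is genuinely operator-theoretic: from the purported identity $\varphi^{-1}(\varphi(A)\sharp\varphi(B))=A\tau B$ with $\tau$ the power mean it derives (after normalizing $a,b$) that $A\le A'$ implies $\|\exp(A^p)\sharp Y\|\le\|\exp(A'^p)\sharp Y\|$ for all $Y\in\App$, concludes via the norm characterization of the order in Lemma \ref{L:3} that $t\mapsto\exp(t^p)$ is operator monotone on $\App$, and then invokes the Nagisa--Ueda--Wada theorem that the existence of a non-concave operator monotone function forces $\A$ to be commutative --- a contradiction. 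Your proposal offers no substitute for this step.

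Your plan for (ii)$\Rightarrow$(i) is also off target, though here the fix is easy. Building a ``different'' $\tau$ from $\varphi(t)=t^p$ is both delicate and unnecessary: as you yourself note, $\varphi^{-1}$ need not be operator monotone, and more importantly the naive functional-calculus expression $\bigl((A^p+B^p)/2\bigr)^{1/p}$-type object is in general \emph{not} the Kubo-Ando mean attached to the representing function $\bigl((1+t^p)/2\bigr)^{1/p}$, so it is unclear that your candidate $\tau$ is a Kubo-Ando mean at all. The correct choice is simply $\varphi(t)=1/t$: then $\varphi^{-1}(\varphi(A)\sigma\varphi(B))=(A^{-1}\sigma B^{-1})^{-1}=A\sigma^*B$ is always a Kubo-Ando mean of the required form, so (ii) forces $\sigma^*=\sigma$, i.e.\ $1/f(1/t)=f(t)$; combined with the symmetry relation $tf(1/t)=f(t)$ this gives $f(t)^2=t$, hence $\sigma$ is the geometric mean. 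This two-line argument needs no non-commutativity and avoids all the axiom-checking you flag; the non-commutativity hypothesis lives entirely in the other direction.
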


In the proof of the theorem we will use the following characterization of the usual order which was given in \cite{CMM}. In what follows $\sharp$ stands for the Kubo-Ando geometric mean.

\begin{lemma}\label{L:3}
Let $\A$ be a $C^*$-algebra and $A,B\in \App$. We have $A\leq B$ if and only if $\| A\sharp X\|\leq \|B\sharp X\|$ holds for all $X\in \App$.
\end{lemma}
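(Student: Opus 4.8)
The plan is to prove the two implications separately; the forward one is routine and the converse carries the content. If $A\leq B$, then since $\sharp$ is a Kubo--Ando mean it is monotone in each argument (property (a) of a connection), so $A\sharp X\leq B\sharp X$ for every $X\in\App$, and since the operator norm is monotone on positive elements this gives $\|A\sharp X\|\leq\|B\sharp X\|$. So only the converse needs work, and for that I would argue by contraposition, producing a single $X\in\App$ that violates the norm inequality whenever $A\not\leq B$.

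So assume $A\not\leq B$. Conjugating by $B^{\mfel}$ shows that $A\leq B$ is equivalent to $B^{\mfel}AB^{\mfel}\leq I$, hence to $\|B^{\mfel}AB^{\mfel}\|\leq1$; thus $K:=B^{\mfel}AB^{\mfel}\in\App$ satisfies $\lambda:=\|K\|>1$. I would fix $\varepsilon>0$ with $\lambda-2\varepsilon>1$ and choose a continuous function $h\colon[0,\lambda]\to[0,1]$ that vanishes on $[0,\lambda-2\varepsilon]$ and has $h(\lambda)=1$. Since $\lambda$ lies in the spectrum of $K$, the element $h(K)\in\A$ is positive and nonzero, and by continuous functional calculus $Kh(K)\geq(\lambda-2\varepsilon)h(K)\geq0$. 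For a small $\delta>0$ I then put $X_\delta:=B^{\fel}(h(K)+\delta I)B^{\fel}$, which lies in $\App$ because $B^{\fel}$ is invertible and $h(K)+\delta I\geq\delta I>0$. (Here everything is read inside a $B(H)$ containing $\A$; all the elements involved stay in $\A$, and Kubo--Ando means, the order, and the norm do not see the difference.)

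The computation I would carry out rests on the fact that every connection $\sigma$ is congruence invariant under a positive invertible $S$, i.e. $S(P\sigma Q)S=(SPS)\sigma(SQS)$; this is the equality case of property (b), obtained by applying (b) first with the conjugator $S$ and then with $S^{-1}$. Taking $S=B^{\fel}$ and using $A=B^{\fel}KB^{\fel}$, I get $B\sharp X_\delta=B^{\fel}(h(K)+\delta I)^{\fel}B^{\fel}$ and $A\sharp X_\delta=B^{\fel}\bigl(K\sharp(h(K)+\delta I)\bigr)B^{\fel}=B^{\fel}\bigl(Kh(K)+\delta K\bigr)^{\fel}B^{\fel}$, the last step because $K$ commutes with $h(K)+\delta I$, so their geometric mean is the square root of their product. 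Letting $\delta\to0^{+}$ and using norm continuity of the functional calculus, $\|B\sharp X_\delta\|\to\beta:=\|B^{\fel}h(K)^{\fel}B^{\fel}\|$ and $\|A\sharp X_\delta\|\to\alpha:=\|B^{\fel}(Kh(K))^{\fel}B^{\fel}\|$, where $\beta>0$ since $B^{\fel}$ is invertible and $h(K)\neq0$. From $Kh(K)\geq(\lambda-2\varepsilon)h(K)\geq0$ and operator monotonicity of the square root function I obtain $(Kh(K))^{1/2}\geq(\lambda-2\varepsilon)^{1/2}h(K)^{1/2}$, hence, after conjugating by $B^{\fel}$ and taking norms, $\alpha\geq(\lambda-2\varepsilon)^{1/2}\beta>\beta$. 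Thus $\|A\sharp X_\delta\|>\|B\sharp X_\delta\|$ for all sufficiently small $\delta>0$, contradicting the hypothesis; this forces $A\leq B$.

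The only genuinely delicate point is this converse, and within it the crucial move is the change of variables by $B^{\mfel}(\cdot)B^{\mfel}$, which turns the failure of $A\leq B$ into the failure of $\|K\|\leq1$ and lets one build a test operator from the top of the spectrum of $K$. I expect the main obstacle to be bookkeeping around the possibility that $\A$ is a very small subalgebra: one cannot use a spectral projection of $K$, so the localization must be done with a continuous $h$ (keeping $h(K)$, and hence $X_\delta$, in $\A$), and the parameter $\delta$ — needed only to keep $X_\delta$ invertible — has to be removed by the limiting argument above rather than being available from the start.
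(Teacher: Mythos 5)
Your proof is correct. Note that the paper does not actually prove Lemma~\ref{L:3}: it is quoted from \cite{CMM}, a paper in preparation, so there is no in-paper argument to compare yours with; what you have written is a sound, self-contained proof of the lemma. The forward direction is, as you say, just property (a) of connections together with monotonicity of the norm on positive elements. In the converse, every step checks out: $A\not\leq B$ gives $\lambda=\|K\|>1$ for the positive element $K=B^{-1/2}AB^{-1/2}\in\App$; the continuous bump $h$ keeps $h(K)$, and hence $X_\delta$, inside $\A$ (correctly avoiding spectral projections); the equality case of property (b) under the invertible conjugator $B^{1/2}$ is legitimate; $K\sharp(h(K)+\delta I)=(Kh(K)+\delta K)^{1/2}$ holds because the two arguments commute; and the norm limits as $\delta\to 0^{+}$, combined with $Kh(K)\geq(\lambda-2\varepsilon)h(K)$ and operator monotonicity of the square root, give $\alpha\geq(\lambda-2\varepsilon)^{1/2}\beta>\beta$, which indeed contradicts the hypothesis for all sufficiently small $\delta$. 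A minor streamlining: the limit in $\delta$ is not needed, since $(Kh(K)+\delta K)^{1/2}\geq(Kh(K))^{1/2}\geq(\lambda-2\varepsilon)^{1/2}h(K)^{1/2}$ yields $\|A\sharp X_\delta\|\geq(\lambda-2\varepsilon)^{1/2}\beta$ for every $\delta>0$, while $(h(K)+\delta I)^{1/2}\leq h(K)^{1/2}+\delta^{1/2}I$ yields $\|B\sharp X_\delta\|\leq\beta+\delta^{1/2}\|B\|$, so one fixed small $\delta$ already produces the required test element.
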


\begin{proof}[Proof of Theorem \ref{T:Ca}]
Assume that (ii) holds for the symmetric Kubo-Ando mean $\sigma$ on $\App$. Then choosing the function $\varphi(t)=1/t$, $t>0$ which produces the adjoint Kubo-Ando mean, we have that
\begin{equation*}
(A^{-1}\sigma B^{-1})^{-1}=A\sigma B, \quad  A,B\in \App. 
\end{equation*}
This implies that for the representing operator monotone function $f:]0,\infty [ \to ]0,\infty [$ of $\sigma$, we have 
\begin{equation*}
\frac{1}{f(\frac{1}{t})}=f(t), \quad t>0.
\end{equation*}
By the symmetry of $\sigma$, we also have $tf(1/t)=f(t)$, $t>0$. From these we deduce that
\begin{equation*}
f(t)^2=t, \quad t>0,
\end{equation*}
which implies that $f(t)=\sqrt{t}$, $t>0$, i.e., $\sigma$ is the geometric mean.

Now assume that there is a continuous strictly monotone function $\varphi:]0,\infty [ \to ]0,\infty[$ such that 
\begin{equation}\label{E:63}
A\tau B=\varphi^{-1}(\varphi(A)\sharp \varphi(B)),\quad A,B\in \App 
\end{equation}
is a Kubo-Ando mean. Let $f$ be the operator monotone function representing $\tau$. Plugging $A=tI, B=sI$, $t,s>0$ into \eqref{E:63} and applying the formula \eqref{E:funct}, we have
\begin{equation*}
\varphi^{-1}(\sqrt{\varphi(t)\varphi(s)})=tf(s/t), \quad t,s>0.
\end{equation*}
Defining $\psi=\log \circ \varphi$, we obtain 
\begin{equation*}
\psi^{-1}\ler{\frac{\psi(t)+\psi(s)}{2}}=tf(s/t), \quad t,s>0.
\end{equation*}
It follows that the numerical quasi-arithmetic mean on the left hand side is homogeneous. It is a famous fact, see \textbf{84.} on page 68 in \cite{HLP}
or Theorem 2 on page 153 in \cite{Acz66}, that this implies that $\psi$ is either of the form 
\begin{equation}\label{E:62}
\psi(t)=a t^p +b, \quad t>0,
\end{equation} 
with some non-zero exponent $p$ and constants $0\neq a\in \R$, $b\in \R$, or it is of the form 
\begin{equation*}
\psi(t)=a \log t +b, , \quad t>0,
\end{equation*}
where $a,b$ are constants with the same properties as above.
Simple calculation shows that, concerning $f$, in the first case we have  
\begin{equation}\label{E:61}
f(t)=\ler{\frac{1+t^p}{2}}^{1/p}, \quad t>0,
\end{equation}
while in the latter case we have 
\begin{equation*}
f(t)=\sqrt{t} , \quad t>0.
\end{equation*}
In addition, we know that the function $f$ is operator monotone. For the square-root function this is true. But as for the former parametric family of functions in \eqref{E:61}, 
by Theorem 4 in \cite{BesPet}, we have operator monotonicity exactly when the non-zero exponent $p$ satisfies $-1\leq p\leq 1$.
Assume we have \eqref{E:61} and \eqref{E:62}. Then we obtain
\begin{equation*}
\varphi(t)=\exp(a t^p +b), \quad t>0
\end{equation*}
and hence, using 
\begin{equation*}
\varphi(A\tau B)=\varphi(A)\sharp \varphi(B), \quad A,B\in \App
\end{equation*}
(this follows from \eqref{E:63}), we infer 
\begin{equation}\label{E:81}
\begin{gathered}
\exp\ler{ a \ler{{A^{\fel}}\ler{\frac{I+({A}^{-\fel}B{A}^{-\fel})^p}{2}}^{1/p}{A^{\fel}}}^p +bI}\\
=\exp\ler{a A^p+bI}\sharp \exp\ler{a B^p +bI},  \quad A,B\in \App.
\end{gathered}
\end{equation} 
Multiplying both sides of this equality by $\exp(-bI)$, we deduce easily that  \eqref{E:81} holds with $b=0$. Next, if necessary, taking inverses on both sides of  \eqref{E:81}, we can assume that $a>0$. Finally, replacing $A,B$ by $a^{-1/p} A, a^{-1/p} B$, respectively, we obtain that
\begin{equation}\label{E:82}
\begin{gathered}
\exp\ler{  \ler{{A}^{\fel}\ler{\frac{I+({A}^{-\fel}B{A}^{-\fel})^p}{2}}^{1/p}{A}^{\fel}}^p}\\
=\exp(A^p)\# \exp(B^p),  \quad A,B\in \App.
\end{gathered}
\end{equation} 
Select arbitrary $A,A'\in \App$ such that $A\leq A'$. Since the log of the left hand side of the equality \eqref{E:82} (as a function of the variables $A,B$) is a Kubo-Ando mean, we infer that
\begin{equation}\label{E:83}
\log \ler{\exp (A^p)\sharp \exp(X)} \leq \log \ler{ \exp (A'^p)\sharp \exp(X)}, \quad X\in \App.
\end{equation} 
The operators $\exp (A^p)\sharp \exp(X), \exp (B^p)\sharp \exp(X)$ are greater than or equal to $I$, hence from \eqref{E:83} we can deduce that
\begin{equation*}
\begin{gathered}
\log \| \exp (A^p)\sharp \exp(X)\|=\| \log(\exp (A^p)\sharp \exp(X))\|   \\ \leq \| \log(\exp (A'^p)\sharp \exp(X))\|= \log \| \exp (A'^p)\sharp \exp(X)\|, \quad X\in \App,
\end{gathered}
\end{equation*}
which gives
\begin{equation}\label{E:ve}
\|\exp (A^p)\sharp \exp(X)\|  \leq \| \exp (A'^p)\sharp \exp(X)\|, \quad X\in \App.
\end{equation}
It is easy to see that any element of $\App$ is a positive scalar multiple of an element of the form $\exp(X)$ with some $X\in \App$. It then follows from \eqref{E:ve} that we have
\begin{equation*}
\|\exp (A^p)\sharp Y\|  \leq \| \exp (A'^p)\sharp Y\|, \quad Y\in \App.
\end{equation*}
Applying Lemma \ref{L:3}, we conclude that $\exp (A^p)\leq \exp (A'^p)$. Therefore, we have proved that the function $t\mapsto \exp(t^p)$ is operator monotone on $\App$. Since, by Theorem 2 in \cite{NUW}, the existence of a non-concave operator monotone function on $\App$ implies the commutativity of $\A$, we arrive at a contradiction. Therefore, only the possibility that $f(t)=\sqrt{t}$, $t>0$ remains implying that $\tau$ is the geometric mean. This completes the proof of the theorem.
\end{proof}

Observe that
for a commutative $C^*$-algebra $\A$, the conditions (i) and (ii) in the above theorem are not equivalent. Indeed, $\A$ then can be assumed to be the algebra $C(K)$ of all continuous complex valued continuous functions on a compact Hausdorff space $K$. The continuous function calculus of a given element $f\in C(K)$ is the composition of continuous complex valued functions defined on the range of $f$ with $f$. Define $\varphi(t)=\exp(t)$, $t>0$. Then $\varphi$ transforms the geometric mean to the arithmetic mean, i.e., 
\[
\varphi^{-1}(\sqrt{\varphi(g)\phi(h)})=\frac{g+h}{2}, \quad g,h\in  C(K)^{++}.
\]
Therefore, Theorem \ref{T:Ca} can be used to characterize commutativity of $C^*$-algebras: On the positive definite cone of the $C^*$-algebra $\A$, the geometric mean can be transformed by the formula \eqref{E:veg} to a different Kubo-Ando mean if and only if $\A$ is commutative.

In the remaining part of the paper we present kind of algebraic characterizations of the arithmetic and harmonic means as Kubo-Ando means having certain operational properties. First observe that each of those means has the property that plugging it into some particular scalar function, we arrive at an associative operation (in the former case this particular function is $t\mapsto 2t$ while in the latter case it is $t\mapsto (1/2)t$, $t>0$). More generally, for any quasi-arithmetic operator mean 
\begin{equation*}
M_\varphi (A,B)=\varphi^{-1} \ler{\frac{\varphi(A)+\varphi(B)}{2}}, \quad A,B\in \Bpp
\end{equation*}
with continuous strictly monotone function $\varphi:]0,\infty[\to ]0,\infty[$, the assignment $(A,B)\mapsto 2\varphi(M_\varphi (A,B))$ defines an associative operation on $\Bpp$. Below we prove that for Kubo-Ando means, the property that a certain continuous strictly increasing function of the mean results in an associative operation on $\Bpp$ characterizes the arithmetic and the harmonic means. We already refered to that the double of the arithmetic mean and the half of the harmonic mean are associative operations on $\Bpp$. (We mention that in the paper \cite{Ando76}, Nishio and Ando used the associativity as a condition in a characterization of the parallel sum (which is the half of the harmonic mean when defined on the whole positive semidefinite cone $\Bp$)).
One may ask how this can happen, why the geometric mean does not show up. Indeed, for numbers, the square of the geometric mean is an associative operation. However, this is no longer true in non-commutative $C^*$-algebras as demonstrated in Proposition 6 in \cite{ML17e}.  

In the next statement we consider even a weaker form of associativity, where we do not have three independent variables, only two (the third one is fixed to the identity). One of the reasons for studying this more general setting is that it leads to a probably interesting problem. 

\begin{theorem}\label{T:ASS}
Let $\sigma$ be a symmetric Kubo-Ando mean with representing operator monotone function $f$.
Assume that there exists a continuous strictly increasing  and surjective function $g:]0,\infty[\to ]0,\infty[$ such that the operation $\diamond: (A,B)\mapsto g(A\sigma B)$, $A,B\in \Bpp$ satisfies 
\begin{equation}\label{E:65}
(A\diamond I) \diamond B= A\diamond (I \diamond B), \quad A,B\in \Bpp.
\end{equation}
Then either we have $g(f(t))=t$, $t>0$ meaning that $A\diamond I=I\diamond A=A$, $A\in \Bpp$ and hence \eqref{E:65} becomes the triviality $A\diamond B=A\diamond B$, $A,B\in \Bpp$, or we have one of the following three possibilities:
\begin{itemize}
\item[(a)] there is a positive scalar $c\neq 1$ such that
$f(c^2 t)=cf(t)$, $t>0$;
\item[(b)] $\sigma$ is the arithmetic mean;
\item[(c)] $\sigma$ is the harmonic mean.
\end{itemize}
\end{theorem}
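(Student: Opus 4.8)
The plan is to convert the weak associativity \eqref{E:65} into a functional identity for $\sigma$ alone. Since $\sigma$ is symmetric its representing function $f$ is non-constant, hence strictly increasing and smooth, and $I\sigma A=f(A)$; so by symmetry $A\diamond I=I\diamond A=g(I\sigma A)=g(f(A))$. Writing $h:=g\circ f$, a continuous strictly increasing map of $]0,\infty[$ onto an interval, \eqref{E:65} becomes $g(h(A)\sigma B)=g(A\sigma h(B))$, and injectivity of $g$ gives
\[
h(A)\sigma B=A\sigma h(B),\qquad A,B\in\Bpp .
\]
Putting $B=I$ and using homogeneity of Kubo-Ando means ($c_0I\sigma B=c_0f(B/c_0)$ with $c_0:=h(1)$) yields $f(h(B))=c_0f(B/c_0)$ for all $B$, so $h=f^{-1}\!\circ\!\bigl(c_0f(\,\cdot/c_0)\bigr)$ is completely determined by $f$ and the scalar $c_0$ (and is in particular smooth, with positive derivative, since $f$ is). Moreover $h=\mathrm{id}$ exactly when $c_0=1$, which is the first alternative of the theorem; from now on assume $c_0\neq1$, so $h\neq\mathrm{id}$.

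Restricting to scalars ($A=tI,B=sI$) gives the numerical equation $h(t)f(s/h(t))=tf(h(s)/t)$, i.e.\ $h(t)\,\sigma\,s=t\,\sigma\,h(s)$. If $h$ has the homogeneous-linear form $h(t)=ct$, then comparing $cA\sigma B$ with $A\sigma cB$ via \eqref{E:funct} forces $f(c^2t)=cf(t)$ for all $t$, with $c=c_0\neq1$ — this is alternative (a) (and one checks conversely that this relation makes $h(t)=ct$ genuinely admissible, forcing $f$ onto $]0,\infty[$ so that $g=cf^{-1}$ is a legitimate surjection). The real difficulty is that the numerical equation alone is far too weak to finish: every power mean $f(t)=\bigl((1+t^{p})/2\bigr)^{1/p}$, $|p|\le1$, satisfies it with the non-linear $h(t)=(1+t^{p})^{1/p}$, yet those means are not on the list. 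Hence the remaining case ($h$ not of the form $ct$) must be closed using the non-commutative content of $h(A)\sigma B=A\sigma h(B)$, which the scalar equation does not see.

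To extract that content I would restrict to $2\times2$ matrices — legitimate because $h(A_0\oplus I)\sigma(B_0\oplus I)$ decomposes as a direct sum and reduces the identity to the same identity for positive definite $2\times2$ matrices $A_0,B_0$ — take $A=\operatorname{diag}(\lambda_1,\lambda_2)$ with $\lambda_1\neq\lambda_2$, and differentiate $h(A)\sigma B=A\sigma h(B)$ at $B=A$ in an off-diagonal self-adjoint direction. Using the standard description of the Fréchet derivative of the continuous functional calculus at a diagonal point (it multiplies matrix entries by the corresponding divided differences of the function) together with the induced derivative of $(X,Y)\mapsto X^{1/2}f(X^{-1/2}YX^{-1/2})X^{1/2}$, the off-diagonal parts of the two sides collapse to a scalar identity relating the divided differences of $f$ along $\lambda\mapsto\lambda/h(\lambda)$ and $\lambda\mapsto h(\lambda)/\lambda$ with the divided difference of $h$. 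In the confluent limit $\lambda_2\to\lambda_1$ this recovers precisely the relation already implied by the numerical equation, so the full divided-difference identity is a genuine strengthening; one verifies that it holds for the geometric-type functions of (a) and for the arithmetic and harmonic means (with $h(t)=t+k$, resp.\ $h(t)=t/(1+kt)$, $k>0$), but fails for the power means with $p\neq\pm1$ — which is exactly what is needed.

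The final step is to feed this divided-difference identity, together with $f(h(t))=c_0f(t/c_0)$ and the rigidity of operator monotone functions (real-analyticity, strict monotonicity, operator concavity), into a case analysis: either $h$ is of the homogeneous-linear form, landing in (a), or the identity forces $f$, under the normalization $f(1)=1$, to be $(1+t)/2$ or $2t/(1+t)$, i.e.\ $\sigma$ is the arithmetic mean (b) or the harmonic mean (c); a concluding routine check shows that in each of (a), (b), (c) a continuous strictly increasing surjective $g$ producing the corresponding $h$ does exist, so none of the possibilities is vacuous. The main obstacle is exactly this last analysis — squeezing out of the divided-difference identity plus operator concavity the rigid dichotomy ``$h$ homogeneous-linear, or $f\in\{(1+t)/2,\,2t/(1+t)\}$''. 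Everything preceding it is essentially bookkeeping, and the non-commutative input is indispensable: on numbers alone the power means would survive.
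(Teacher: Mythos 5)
Your reduction of \eqref{E:65} to the operator identity $h(A)\sigma B=A\sigma h(B)$ with $h=g\circ f$, the scalar consequence $f(h(t))=c_0f(t/c_0)$, and the observation that the case $h(t)=ct$ yields exactly the trivial alternative ($c=1$) or alternative (a) ($c\neq 1$) are all correct, and you rightly diagnose that the scalar equation alone cannot finish (the power means do satisfy it with $h(t)=(1+t^p)^{1/p}$), so genuinely non-commutative input is needed. But at precisely that point the proposal stops being a proof: the decisive step --- showing that when $h$ is \emph{not} of the form $ct$ the operator identity forces $\sigma$ to be the arithmetic or harmonic mean --- is only sketched. You propose to differentiate the $2\times 2$ restriction at $B=A$ in an off-diagonal direction to obtain a divided-difference identity, but you neither derive that identity explicitly, nor verify that it excludes the power means with $p\neq\pm1$, nor (most importantly) show that it excludes \emph{every} symmetric operator monotone $f$ outside the stated list; you yourself call this last analysis ``the main obstacle''. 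Since this is the mathematical heart of the theorem, the argument has a genuine gap rather than merely an omitted routine verification.

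For comparison, the paper closes exactly this gap by a different mechanism. Assuming first $f(0)=0$, it evaluates \eqref{E:65} with $A$ a rank-one projection and uses the strength-function formula \eqref{E:36} to show that $B\leq B'\Longleftrightarrow h(B)\leq h(B')$ on $\Bpp$; the structure theorem for order automorphisms (Lemma \ref{L:1}, resting on Moln\'ar's description of order automorphisms of $\Bpp$) then forces $h(t)=ct$ (when $h$ is onto $]0,\infty[$, giving the trivial case or (a)) or $h(t)=\beta^2ct/(\beta ct+1)$ (when the range is a bounded interval). In the latter case the scalar equation, rewritten for $F(t)=1/f(1/t)$ and differentiated, combines with operator concavity (monotonicity of $F'$) to force $F$ affine and hence $\sigma$ harmonic; the case $f(0)>0$ is reduced to the previous one via the adjoint mean and $G(t)=1/g(1/t)$, producing the arithmetic mean. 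So the paper's non-commutative input is the rank-one/strength-function rigidity plus the order-automorphism theorem, not a derivative computation; if you want to salvage your route, you would have to actually establish your divided-difference dichotomy, which is an open task in your write-up.
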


The proof of this result rests on a few auxiliary statements. The first of them is the following.

\begin{lemma}\label{L:1}
Let $\varphi:]0,\infty[\to ]0,\infty[$ be a continuous strictly monotone increasing and surjective function. Assume that for any $A,A'\in \Bpp$ we have $A\leq A'$ if and only if $\varphi(A)\leq \varphi(A')$. Then $\varphi(t)=ct$, $t>0$ holds for some positive constant $c$. 
\end{lemma}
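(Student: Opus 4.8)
The plan is to exploit the fact that the hypothesis says precisely that $\varphi$ induces an order automorphism of $\Bpp$ in the operator order, and then to use the known structure theorem for such maps (Theorem 1 in \cite{ML11a}, already invoked in the proof of Proposition \ref{P:nonex}). That theorem tells us that there is an invertible bounded linear or conjugate-linear operator $X$ on $H$ such that $\varphi(A)=XAX^*$ for all $A\in\Bpp$. So the whole content of the lemma will be to show that a continuous function calculus map $A\mapsto\varphi(A)$ can agree with a congruence $A\mapsto XAX^*$ only when $X$ is a scalar multiple of the identity and $\varphi$ is linear.

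First I would pin down $X$. Apply the identity $\varphi(A)=XAX^*$ to $A=tI$ for $t>0$: this gives $\varphi(t)I=tXX^*$, so $XX^*=\varphi(1)I$ (taking $t=1$) and $\varphi(t)=\varphi(1)t$ would follow \emph{if} we already knew $X$ were unitary up to scaling — but a priori we only get that $XX^*$ is a positive scalar multiple of $I$, i.e. $X=c\,U$ with $c=\varphi(1)^{1/2}>0$ and $U$ unitary (or antiunitary). Then $\varphi(A)=c^2\,UAU^*$ for all $A\in\Bpp$. Now I would use that $\varphi$ is a \emph{function calculus} map: for any $A\in\Bpp$ and any continuous $h:\,]0,\infty[\to\,]0,\infty[$ we have $\varphi(h(A))=h(\varphi(A))$ is false in general, but what is true and what I will use is that $\varphi$ acts as $t\mapsto\varphi(t)$ on the spectrum — concretely, if $A$ has spectral projection $P$ onto its eigenvalue $\lambda$ (take $A$ with a nontrivial spectral projection, which exists since $\dim H\ge 2$; if $H$ is infinite-dimensional take instead $A$ with two points in its spectrum and use spectral subspaces), then $\varphi(A)$ must have the same spectral projections with eigenvalues $\varphi(\lambda)$. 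On the other hand $c^2UAU^*$ has spectral projections $UPU^*$. Choosing $A=\lambda P+\mu(I-P)$ with $\lambda\neq\mu$, comparing $\varphi(A)=\varphi(\lambda)P+\varphi(\mu)(I-P)$ with $c^2UAU^*=c^2(\lambda UPU^*+\mu U(I-P)U^*)$ forces $UPU^*=P$ (eigenspaces must match) for every rank-considered projection $P$, hence $U$ commutes with all projections and is a scalar, so $U=\omega I$ with $|\omega|=1$; thus $\varphi(A)=c^2A$ and in particular $\varphi(t)=c^2t$, which is the claim with constant $c^2$.

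Alternatively, and perhaps more cleanly, once $\varphi(A)=c^2UAU^*$ is in hand I would simply note that the left side is $\varphi$ applied entrywise to spectrum while the right side is a $*$-automorphism-type conjugation composed with a scalar; evaluating at a rank-one projection $P$ gives $\varphi(1)P+\varphi(\text{value on kernel})\cdots$ — cleanest is: take $A$ a projection-valued combination as above, deduce $U$ scalar, done. The only mild obstacle is the infinite-dimensional case, where $\varphi(A)=XAX^*$ need not hold for \emph{all} $A\in\Bpp$ but (as remarked after \eqref{E:funct} and in \cite{ML11a}) the structure theorem still applies to order automorphisms of $\Bpp$; assuming that theorem as stated, the argument above goes through verbatim by using spectral subspaces in place of eigenprojections. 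So the main step is really just the reduction to \cite[Theorem 1]{ML11a} followed by the elementary observation that a scalar-times-unitary congruence is a function-calculus map only when the unitary is scalar — which I expect to be routine.
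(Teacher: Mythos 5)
Your proposal is correct and follows essentially the same route as the paper: reduce to Moln\'ar's structure theorem for order automorphisms of $\Bpp$ (Theorem 1 in \cite{ML11a}) to get $\varphi(A)=XAX^*$, then pin down $X$; the paper does the last step by extending $\varphi$ to $0$ and evaluating at rank-one projections, while you use $A=tI$ and two-point-spectrum operators $\lambda P+\mu(I-P)$, which works equally well and avoids the limiting argument. One small remark: your hesitation that ``$\varphi(t)=\varphi(1)t$ would follow if we already knew $X$ were unitary up to scaling'' is unfounded --- from $\varphi(t)I=tXX^*$ for all $t>0$ and the $t=1$ case $XX^*=\varphi(1)I$ you already get $\varphi(t)=\varphi(1)t$, which is the full conclusion of the lemma, so the subsequent argument that $U$ is scalar (needed only if one wants $\varphi(A)=cA$ as an operator identity) is redundant, though correct.
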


\begin{proof}
We know that $A\mapsto \varphi(A)$ is an order automorphism of $\Bpp$. We recall that it follows from Theorem 1 in \cite{ML11a} that there is a bounded invertible either linear or conjugate-linear operator $X$ on $H$ such that
\begin{equation}\label{E:35}
\varphi(A)=XAX^*, \quad A\in \Bpp.
\end{equation}
Let us extend $\varphi$ to $[0,\infty[$ with $\varphi(0)=0$. Clearly, the extension, what we denote by the same symbol $\varphi$, is continuous on $[0,\infty[$.
By taking limit in \eqref{E:35}, we have that 
\[
\varphi(1)P=\varphi(P)=XPX^*
\]
holds for every rank-one propejction $P$ on $H$. This implies that for any unit vector $h\in H$, the vector $Xh$ is a scalar multiple of $h$. It gives us that $X$ is a scalar multiple of the identity which completes the proof.
\end{proof}

Just a quick remark. One may think that it follows from the conditions of the previous lemma that $\varphi$ is operator monotone and hence one can use the theory of those functions to immediately get the desired conclusion. However, that argument is not correct since $H$ is a given Hilbert space which is not necessarily infinite dimensional.

Beside the above auxiliary result, we will also need the concept  of strength functions and some of their properties.
Strength functions are certain functions associated to positive semidefinite Hilbert space operators. The concept was introduced by Busch and Gudder in the paper \cite{BusGud99}.  Denote by $\PI$ the set of all rank-one projections on $H$. For any $A\in \Bp$ define
\begin{equation*}
\l(A,P)=\sup \{t\geq 0\, :\, tP\leq A\}, \quad P\in \PI.
\end{equation*}
The function $\l(A,\cdot )$ is called the strength function of $A\in \Bp$. 
By Theorem 1 in \cite{BusGud99}, the assignment
$A\longmapsto \l(A,\cdot )$
is a one-to-one correspondence between $\Bp$ and the collection of all strength functions on $\PI$ which preserves the order in both directions. Here, on $\Bp$ we consider the usual order (coming from positive semidefiniteness) while on the collection of strength functions we consider the usual pointwise order between functions ($\l(A,\cdot )\leq \l(B,\cdot )$ if and only if $\l(A,P)\leq \l(B,P)$ holds for all $P\in \PI$). Therefore, for any $A,B\in \Bp$ we have $A\leq B$ if and only if $\l(A,\cdot )\leq \l(B,\cdot )$.
Strength functions appear also in relation with Kubo-Ando means. Indeed, in Lemma 2.6 in \cite{ML11g} we proved that for any symmetric Kubo-Ando mean $\sigma$ with representing operator monotone function $f$ which satisfies $f(0)=0$, the equality
\begin{equation}\label{E:36}
A\sigma P=P\sigma A=f(\l (A,P)) P
\end{equation}
holds for any $A\in \Bp$ and $P\in \PI$. We will use this formula several times in what follows.

We now turn to the proof of our theorem above. 
We remark that in what follows, some monotone functions might only be defined on $]0,\infty[$, originally not defined at $0$. If, in spite of this, we still write the value of such a  function at $0$, we always mean its limit at $0$.

\begin{proof}[Proof of Theorem \ref{T:ASS}]
Assume first that $f(0)=0$.
Let $h$ be the composite function $h=g\circ f$. Clearly $g(0)=0$, so we have $h(0)=0$, and since $f,g$ are strictly increasing, it follows that $h(1)\neq 0$.
Let $A=P$ be any rank-one projection on $H$ and $B\in \Bpp$ be arbitrary. We compute the two sides of the equality \eqref{E:65}. 
As for the left hand side, using \eqref{E:36}, we have
\begin{equation*}
\begin{gathered}
(P\diamond I)\diamond B=(I\diamond P)\diamond B= h(P)\diamond B=(h(1)P)\diamond B\\=
g\ler{h(1) \ler{P\sigma \ler{\frac{1}{h(1)} B}}}=
g\ler{h(1) f\ler{\frac{1}{h(1)}\lambda(B,P)}}P.
\end{gathered}
\end{equation*}
The right hand side of \eqref{E:65} can be computed as follows
\begin{equation*}
P\diamond (I \diamond B)=P\diamond h(B)=g(f(\lambda (h(B),P)))P.
\end{equation*}
Since the two sides are equal, we obtain
\begin{equation*}\label{E:66}
g\ler{h(1) f\ler{\frac{1}{h(1)}\lambda(B,P)}}=g(f(\lambda (h(B),P)))
\end{equation*}
and hence that
\begin{equation}\label{E:66a}
h(1) f\ler{\frac{1}{h(1)}\lambda(B,P)}=f(\lambda (h(B),P)).
\end{equation}
Moreover, $f$ is strictly monotone increasing, hence
it follows from \eqref{E:66a} that for any given $B,B'\in \Bpp$ and for all rank-one projection $P\in \PI$ we have
\begin{equation*}
\lambda(B,P)\leq \lambda(B',P) \Leftrightarrow
\lambda(h(B),P)\leq \lambda(h(B'),P).
\end{equation*}
By the mentioned properties of strength functions, we conclude that for any given $B,B'\in \Bpp$, the inequality 
$B\leq B'$ is equivalent to $h(B)\leq h(B')$. 

Assume that $h$ maps $]0,\infty[$ onto itself. Then it follows from Lemma \ref{L:1} that $h(t)=ct$, $t>0$ holds with the positive constant $c=h(1)$. If $c=1$, then we have the trivial case $g(f(t))=t$, $t> 0$ described in the theorem. If $c\neq 1$, then from \eqref{E:66a} we easily obtain that
\begin{equation*}
cf(t/c)=f(ct), \quad t>0
\end{equation*}
and this leads to the possibility (a).

Assume now that $h$ maps $]0,\infty[$ onto a finite interval $]0,\beta[$, $\beta <\infty$. 
Then for the function
$k: t\mapsto \frac{1}{\beta -h(t)}-\frac{1}{\beta}$ we have that $A\mapsto k(A)$ is a bijection of $\Bpp$ which preserves the order in both directions, i.e., for any $A,A'\in \Bpp$ we have $A\leq A'$ if and only if $k(A)\leq k(A')$. Therefore, by Lemma \ref{L:1} again, $k$ is a positive scalar multiple of the identity. This means that
\begin{equation*}
\frac{1}{\beta -h(t)}-\frac{1}{\beta}=ct, \quad t>0
\end{equation*}
holds with some positive constant $c$. Simple calculation shows that, for $h$ itself, the equality above yields
\begin{equation}\label{E:52}
g(f(t))=h(t)=\frac{\beta^2ct}{\beta ct+1}, \quad t>0.
\end{equation}

Consider the equality \eqref{E:65} now for $A=tI, B=sI$, $t,s>0$. We have
\begin{equation*}
\begin{gathered}
g\ler{h(t)f\ler{\frac{s}{h(t)}}}=
g\ler{tf\ler{\frac{h(s)}{t}}}
\end{gathered}
\end{equation*}
and then, by the injectivity of $g$, we have
\begin{equation*}
\begin{gathered}
h(t)f\ler{\frac{s}{h(t)}}=
tf\ler{\frac{h(s)}{t}}
\end{gathered}
\end{equation*}
for all $t,s>0$.
By the symmetry of the Kubo-Ando mean $\sigma$ we have $tf(1/t)=f(t)$, $t>0$. Hence,
\[
h(t)f\ler{\frac{s}{h(t)}}=sf\ler{\frac{h(t)}{s}}
\]
and thus we obtain
\begin{equation*}
\begin{gathered}
sf\ler{\frac{h(t)}{s}}=
tf\ler{\frac{h(s)}{t}}, \quad t,s>0.
\end{gathered}
\end{equation*}
Define the function $F$ by $F(t)=1/(f(1/t))$, $t>0$. Simple calculation shows that
\begin{equation*}
\begin{gathered}
sF\ler{\frac{t}{h(s)}}=
tF\ler{\frac{s}{h(t)}} , \quad t,s>0.
\end{gathered}
\end{equation*}
Using \eqref{E:52}, we obtain
\begin{equation*}
\begin{gathered}
sF\ler{\frac{t(\beta cs+1)}{\beta^2 cs}}=
tF\ler{\frac{s(\beta ct+1)}{\beta^2 ct}}, \quad t,s>0.
\end{gathered}
\end{equation*}
Denoting $x=s/t$, we deduce
\begin{equation*}
\begin{gathered}
xF\ler{\frac{\beta cs+1}{\beta^2 cx}}=
F\ler{\frac{\beta cs+x}{\beta^2 c}},\quad x,s>0.
\end{gathered}
\end{equation*}
Since $f$ is operator monotone, it is infinitely many times differentiable and hence the same is true for $F$. In fact, $F$ is the representing operator monotone function of the adjoint of $\sigma$. Differentiating in the equality above with respect to the variable $s$, we obtain
\begin{equation}\label{E:67}
\begin{gathered}
F'\ler{\frac{\beta cs+1}{\beta^2 cx}}=
F'\ler{\frac{\beta cs+x}{\beta^2 c}},\quad x,s>0.
\end{gathered}
\end{equation}
Since $F$ is operator monotone, it is necessarily operator concave implying that $F'$ is monotone decreasing. We then deduce from \eqref{E:67} that $F'$ is constant on the interval bordered by the points
\begin{equation*}
\frac{\beta cs+1}{\beta^2 cx},\frac{\beta cs+x}{\beta^2 c}.
\end{equation*}
Fixing $s$ and letting $x$ tend to infinity, we see that the former end-points converge to 0 while the latter end-points converge to infinity. This means that $F'$ is constant on the whole interval $]0,\infty[$. Consequently, $F(t)=dt+e$, $t>0$ holds with some constants $d>0$, $e\geq 0$.
We then infer that $f$ is of the form 
\begin{equation*}
f(t)=\frac{t}{et+d}, \quad t>0.
\end{equation*}
By the symmetry of the Kubo-Ando mean $\sigma$, it follows that $e=d$ and then, using the fact that $f(1)=1$, we have 
\begin{equation*}
f(t)=\frac{2t}{1+t}, \quad t>0.
\end{equation*}
Therefore, $\sigma$ is the harmonic mean which is the possibility (c) in the theorem.

Assume now that $f(0)>0$. Then for the operator monotone function
\begin{equation*}
F(t)=1/(f(1/t))=t/f(t), \quad t>0
\end{equation*}
we have $F(0)=0$. As we have noted above, $F$ is the representing operator monotone function of the adjoint $\sigma^*$ of $\sigma$,
\[
A\sigma^* B=(A^{-1}\sigma B^{-1})^{-1}, \quad A,B\in \Bpp,
\]
which is a symmetric Kubo-Ando mean.
Define $G:]0,\infty [\to ]0,\infty [$ by
\begin{equation*}
G(t)=1/(g(1/t)), \quad t>0.
\end{equation*}
Since $g$ maps onto $]0,\infty[$, it follows that $G$ is a continuous strictly increasing function which also maps $]0,\infty[$ onto itself. Moreover, the operation
$\circ : (A,B)\to G(A\sigma^* B)$, $A,B\in \Bpp$ satisfies 
\eqref{E:65}. 
Indeed, it easily follows from the observations that
\[
A\circ B=G( A\sigma^* B)=(g(A^{-1}\sigma B^{-1}))^{-1}=(A^{-1}\diamond B^{-1})^{-1} , \quad A,B\in \Bpp.
\]
Therefore, by the first part of the proof, we obtain that either $G(F(t))=t$, $t>0$ which is equivalent to $g(f(t))=t$, $t>0$, or $G(F(t))=ct$, $t>0$ holds with some positive constant $c\neq 1$ which is equivalent to that $g(f(t))=(1/c)t$, $t>0$. In this latter case we once again (as in the first part of the proof) obtain the possibility (a) with $1/c$ in the place of $c$. Otherwise, by the argument above we conclude  that $\sigma^*$ is the harmonic mean implying that the original mean $\sigma$ is the arithmetic mean. This is the possibility (b) in the theorem. The proof is now complete.
\end{proof}

We remark the following.
First, the converse of the statement in Theorem \ref{T:ASS} is also true. That means that if $\sigma$ is a symmetric Kubo-Ando mean which is either the arithmetic mean, or the harmonic mean, or the representing operator monotone function $f$ of $\sigma$ has the property that there is a positive constant $c$ different from 1 such that
\begin{equation}\label{E:c}
f(c^2t)=cf(t), \quad t>0, 
\end{equation}
then there a continuous strictly monotone increasing and surjective function $g:]0,\infty \to ]0,\infty[$ such that the operation $\diamond: (A,B)\mapsto g(A\sigma B)$, $A,B\in \Bpp$ satisfies \eqref{E:65}. Indeed, the cases of the arithmetic and harmonic means are trivial. In the remaining case we can assume that $c<1$ (indeed, if $c$ satisfies \eqref{E:c}, then $1/c$ also satisfies it). We can next deduce that $f(c^{2n}t)=c^nf(t)$ is valid for any integer $n$. This implies that the limit of $f$ is 0 at 0 and $\infty$ at $\infty$. Therefore, $f$ is a bijection of $]0,\infty[$ onto itself. Defining $g(t)=cf^{-1}(t)$, $t>0$ we get the desired function $g$. Indeed, we have $g(f(t))=ct$ and $cf((1/c)t)=f(ct)$, $t>0$, and one can check that
\[
\begin{gathered}
g(A\sigma I)\sigma B=g(f(A))\sigma B=(cA)\sigma B\\=
c {A}^{\fel}f((1/c){A}^{-\fel}B{A}^{-\fel}){A}^{\fel}=
{A}^{\fel}f(c{A}^{-\fel}B{A}^{-\fel}){A}^{\fel}\\=
A\sigma(cB)=A\sigma g(f(B))=A\sigma g(I\sigma B)
\end{gathered}
\]
holds for all $A,B\in \Bpp$
which is equivalent to \eqref{E:65}.

And now the problem that we have mentioned before the formulation of Theorem \ref{T:ASS}. It concerns the possibility (a). Assume $f:]0,\infty[\to ]0,\infty[$ is an operator monotone function with $f(0)=0$ and $f(1)=1$ which is symmetric (i.e., satisfies $tf(1/t)=f(t)$, $t>0$) and has the property that $f(c^2t)=cf(t)$, $t>0$ holds with some positive real number $c\neq 1$. Does it follow that $f$ is necessarily the square root function? We recall that operator monotone functions have very strong analytic regularity properties and the condition (a) also looks very restrictive. However, we still do not know if the answer to the question is positive or negative. If it were affirmative, then we would get an  interesting common characterization of the three fundamental operator means, the arithmetic, harmonic and geometric means.

We now turn to the 'global' associativity, i.e., we do not make any restriction on the variables. We quite easily obtain the following result characterizing the arithmetic and harmonic means.

\begin{theorem}\label{T:assoc}
Let $\sigma$ be a symmetric Kubo-Ando mean. 
Assume that there exists a continuous strictly increasing  and surjective function $g:]0,\infty[\to ]0,\infty[$ such that the operation $\diamond: (A,B)\mapsto g(A\sigma B)$, $A,B\in \Bpp$ 
is associative. Then $\sigma$ is either the arithmetic mean or the harmonic mean.
\end{theorem}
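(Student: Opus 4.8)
The plan is to deduce Theorem \ref{T:assoc} from Theorem \ref{T:ASS} and then, using the \emph{full} three‑variable associativity, to eliminate the two possibilities in Theorem \ref{T:ASS} that are neither the arithmetic nor the harmonic mean. First I would observe that specializing the middle variable to $I$ in $(A\diamond B)\diamond C=A\diamond (B\diamond C)$ gives precisely the weak associativity \eqref{E:65}; hence Theorem \ref{T:ASS} applies and we are in one of the cases: $\sigma$ is the arithmetic mean, $\sigma$ is the harmonic mean, $g(f(t))=t$ for all $t>0$, or there is a $c\neq 1$ with $f(c^2t)=cf(t)$ for all $t>0$. In the first two cases we are done, so I claim the last two cannot occur. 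In both of them the composite $h=g\circ f$ is, as the proof of Theorem \ref{T:ASS} shows, the map $t\mapsto ct$ with $c=h(1)$ (with $c=1$ in the third case). Iterating $f(c^2t)=cf(t)$ (or, in the third case, using that $g$ is onto $]0,\infty[$) one sees that $f$ has limit $0$ at $0$ and $\infty$ at $\infty$, hence is a continuous strictly monotone bijection of $]0,\infty[$ onto itself; consequently $g=cf^{-1}$ on $]0,\infty[$.

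The next step is to exploit that $\diamond$ restricts to an associative operation on the scalars. The set $\{tI:t>0\}$ is closed under $\diamond$, and on it $\diamond$ is given by the numerical operation $m(t,s)=g\bigl(tf(s/t)\bigr)=cf^{-1}\bigl(tf(s/t)\bigr)$, which is continuous, strictly increasing in each variable (using that $\sigma$, and hence $(t,s)\mapsto tf(s/t)$, is symmetric), and associative. Since $f$ is a bijection of $]0,\infty[$ and $g$ is onto $]0,\infty[$, for each fixed $t$ the map $s\mapsto m(t,s)$ has limits $0$ and $\infty$; so by the classical structure theorem for continuous cancellative associative operations on an interval (Aczél, \cite{Acz66}) there is a continuous strictly monotone bijection $\Psi:]0,\infty[\to\R$ with $m(t,s)=\Psi^{-1}(\Psi(t)+\Psi(s))$. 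Since $tI\sigma tI=tI$ we have $m(t,t)=g(t)$, whence $\Psi\circ g=2\Psi$; substituting this back yields $tf(s/t)=\Psi^{-1}\bigl((\Psi(t)+\Psi(s))/2\bigr)$. Thus the positively homogeneous mean $(t,s)\mapsto tf(s/t)$ is a quasi-arithmetic mean with generator $\Psi$, and by the classical theorem that a positively homogeneous quasi-arithmetic mean has a power or a logarithmic generator (see \textbf{84} on p.~68 of \cite{HLP}, or \cite{Acz66}; this is the same tool used in the proof of Theorem \ref{T:Ca}) we get either $\Psi(t)=at^p+b$ with $a\neq 0$, $p\neq 0$, or $\Psi(t)=a\log t+b$.

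Finally I would settle the two subcases. If $\Psi(t)=at^p+b$, then $tf(s/t)=((t^p+s^p)/2)^{1/p}$, so $f(u)=((1+u^p)/2)^{1/p}$; but this function maps $]0,\infty[$ onto a proper subinterval of $]0,\infty[$ (bounded away from $0$ when $p>0$, and bounded above when $p<0$), contradicting the fact established above that $f$ is a bijection of $]0,\infty[$. If $\Psi(t)=a\log t+b$, then $tf(s/t)=\sqrt{ts}$, so $f(u)=\sqrt u$ and $\sigma$ is the geometric mean; moreover $\Psi\circ g=2\Psi$ forces $g(t)=e^{b/a}t^2$, so $\diamond$ is a fixed positive scalar multiple of $(A,B)\mapsto (A\sharp B)^2$, and — using $(\lambda X)\sharp Y=\sqrt{\lambda}\,(X\sharp Y)$ — $\diamond$ is associative exactly when $(A,B)\mapsto (A\sharp B)^2$ is. The latter fails to be associative on a non-commutative $C^*$-algebra by Proposition 6 in \cite{ML17e}, a contradiction. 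Hence the third and fourth cases are impossible and $\sigma$ must be the arithmetic or the harmonic mean.

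I expect the only genuinely delicate part to be the passage through the classical functional-equation results for numerical means — the structure theorem for associative operations and the power/log classification of homogeneous quasi-arithmetic means — together with verifying that the generators they produce really do clash with the standing hypotheses (surjectivity of $g$, equivalently $f$ being a bijection of $]0,\infty[$, and the non-associativity of the squared geometric mean). Once these ingredients are assembled the deduction is short, which is why this result should follow "quite easily" from Theorem \ref{T:ASS}.
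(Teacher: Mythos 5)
Your proposal is correct and follows essentially the same route as the paper: reduce via the proof of Theorem \ref{T:ASS} to ruling out $g(f(t))=ct$, restrict $\diamond$ to the scalar operators, apply Acz\'el's representation theorem for associative operations and the power/logarithm classification of homogeneous quasi-arithmetic means, and kill the remaining cases using the surjectivity hypothesis and the non-associativity of (scalar multiples of) the squared geometric mean from Proposition 6 in \cite{ML17e}. The only cosmetic difference is that you exclude the power case by noting $f$ fails to be onto $]0,\infty[$, whereas the paper computes $g$ explicitly and observes $g\circ f$ cannot be linear; these are equivalent uses of the same constraint.
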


\begin{proof}
In view of the proof of Theorem \ref{T:ASS} above, what we have to do is to rule out the possibility that $g(f(t))=ct$, $t>0$ holds with some positive scalar $c$, where $f$ is the operator monotone function representing $\sigma$.

Define the function $K:]0,\infty [\times ]0,\infty[\to ]0,\infty[$ by $K(t,s)I=(tI) \sigma (sI)$, $t,s>0$. Clearly, $K$ is strictly increasing in both of its variables and satisfies the so-called associativity equation
\begin{equation*}
K(K(t,s),r)=K(t,K(s,r)),  \quad t,s>0.
\end{equation*}
The solution of this equation is well-known.
Indeed, it follows from the theorem on page 256 in \cite{Acz66} (or see Result 11.2 on page 472 in \cite{Kan}) that
$K$ is necessarily of the form
\begin{equation*}
K(t,s)=\varphi^{-1}(\varphi(t)+\varphi(s)), \quad t,s>0
\end{equation*}
with some continuous strictly monotone function $\varphi :]0,\infty [\to \R$. For any $t,s>0$, we compute
\begin{equation}\label{E:01}
g\ler{tf\ler{\frac{s}{t}}}=K(t,s)=\varphi^{-1}(\varphi(t)+\varphi(s)).
\end{equation}
With $t=s$ we have
\begin{equation}\label{E:02}
g(t)=\varphi^{-1}(2\varphi(t)), \quad t>0.
\end{equation}
Hence, by \eqref{E:01}, it follows that
\begin{equation*}
\varphi^{-1}\ler{2\varphi\ler{tf\ler{\frac{s}{t}}}}=
\varphi^{-1}(\varphi(t)+\varphi(s))
\end{equation*}
and this implies 
\begin{equation*}
2\varphi\ler{tf\ler{\frac{s}{t}}}=
\varphi(t)+\varphi(s), \quad t,s>0.
\end{equation*}
It then follows that 
\begin{equation*}
tf\ler{\frac{s}{t}}=
\varphi^{-1}\ler{\frac{\varphi(t)+\varphi(s)}{2}}, \quad t,s>0.
\end{equation*}
This equality tells us that the quasi-arithmetic mean on the right hand side is homogeneous. Just as in the proof of Theorem \ref{T:Ca}, we obtain that $\varphi$ is either of the form
\begin{equation*}
\varphi(t)=a t^p +b, \quad t>0,
\end{equation*} 
with some non-zero exponent $p$ and constants $0\neq a\in \R$, $b\in \R$, or it is of the form 
\begin{equation*}
\varphi(t)=a \log t +b, \quad t>0,
\end{equation*}
where $a,b$ are constants with the same properties as above.
Next, just as in that proof, it follows that concerning $f$ we have the following two possibilities: either
\begin{equation*}
f(t)=\ler{\frac{1+t^p}{2}}^{1/p}, \quad t>0
\end{equation*}
holds with some non-zero $p$ with $-1\leq p\leq 1$, or
\begin{equation*}
f(t)=\sqrt{t} , \quad t>0.
\end{equation*}
As for $g$, using \eqref{E:02}, we can deduce that either
\begin{equation*}
g(t)=(2t^p+(b/a))^{1/p} , \quad t>0
\end{equation*}
or 
\begin{equation*}
g(t)=t^2 \exp(b/a) , \quad t>0.
\end{equation*}
By the surjectivity of $g$, in the first case we must have $b=0$.
Therefore, concerning the composite function $g\circ f$ we have either $g(f(t))=(1+t^p)^{1/p}$, $t>0$ or $g(f(t))=t\exp(b/a)$, $t>0$. Since, on the other hand, we assumed $g(f(t))=ct$, $t>0$, the first possibility is ruled out immediately. The possibility that $f(t)=\sqrt{t}$, $g(t)=t^2 \exp(b/a),$ $t>0$ is ruled out by the fact (what we have already mentioned) that the square of the geometric mean is not associative on non-commutative $C^*$-algebras and neither so is any of its scalar multiples. This completes the proof.
\end{proof}

We now turn to another algebraic characterization of the arithmetic and harmonic means.
One can easily see that those two means satisfy the following equality
\begin{equation}\label{E:1}
(A\sigma B)\sigma (C\sigma D)=
(A\sigma C)\sigma (B\sigma D), \quad A,B,C,D\in \Bpp.
\end{equation}
Indeed, \eqref{E:1} is fulfilled by any quasi-aritmetic operator mean.
In the one-dimensional case, on any subinterval $J$ of $\R$, the above equation for an operation on $J$ in the place of the mean $\sigma$ is called the mediality (or bisymmetry) condition and it is known to characterize quasi-arithmetic means to a certain extent, see Chapter 17 in \cite{AD}. Note that also the more general pair of conditions, where in \eqref{E:1} we have $A=B$, respectively, $C=D$, were considered in \cite{AD} and called the equations of self-distributivity. It was proved there that that pair of three variable conditions also characterize quasi-arithmetic means to some extent on the reals.
 
In the next result we prove that the validity of \eqref{E:1} for symmetric Kubo-Ando operator means characterizes the arithmetic and harmonic means already on an even much more restricted domain, with two independent variables only.

\begin{theorem}\label{T:H1}
The symmetric Kubo Ando mean $\sigma$ is either the arithmetic or the harmonic mean if and only if it satisfies
\begin{equation}\label{E:71}
(A\sigma A) \sigma (I\sigma B)=
(A\sigma I)\sigma (A\sigma B), \quad A,B,\in \Bpp.
\end{equation}
\end{theorem}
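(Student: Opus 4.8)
The plan is to establish the nontrivial (``if'') direction; the ``only if'' part is immediate, since both the arithmetic and the harmonic mean are quasi-arithmetic operator means ($\varphi(t)=t$, resp.\ $\varphi(t)=1/t$ in \eqref{E:QAM}) and every such mean satisfies the mediality identity \eqref{E:1}, of which \eqref{E:71} is the special case obtained by inserting $A$ for the second and $I$ for the third argument.

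Assume now \eqref{E:71}. Using $A\sigma A=A$, $I\sigma B=f(B)$, and $A\sigma I=I\sigma A=f(A)$ (the last equality being where symmetry of $\sigma$ is used), \eqref{E:71} rewrites as
\[
A\sigma f(B)=f(A)\sigma(A\sigma B),\qquad A,B\in\Bpp.
\]
I would then reduce to the case $f(0)=0$: substituting $A^{-1},B^{-1}$ for $A,B$ in \eqref{E:71}, taking inverses of both sides and using $(X\sigma Y)^{-1}=X^{-1}\sigma^*Y^{-1}$ shows that \eqref{E:71} holds for $\sigma$ precisely when it holds for $\sigma^*$, which is again symmetric and whose representing function $F(t)=1/f(1/t)=t/f(t)$ satisfies $F(0)=0$ whenever $f(0)>0$. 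As the adjoint of the harmonic mean is the arithmetic mean, it is enough to prove that $f(0)=0$ forces $\sigma$ to be harmonic.

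So let $f(0)=0$. Then $f(1)=1$ gives $f(P)=P$ for every $P\in\PI$, and \eqref{E:36} is at our disposal. Approximating a rank-one projection $P$ by $P+\frac{1}{n}I\downarrow P$ and using property (c), the identity $A\sigma f(B)=f(A)\sigma(A\sigma B)$ remains valid with $A=P\in\PI$; computing both sides by \eqref{E:36}, using $P\sigma X=f(\l(X,P))P$ and, in particular, $P\sigma(cP)=f(c)P$, one gets $f(\l(f(B),P))=f(f(\l(B,P)))$ and hence, $f$ being injective,
\[
\l(f(B),P)=f(\l(B,P)),\qquad B\in\Bpp,\ P\in\PI.
\]
Since $B\mapsto\l(B,\cdot)$ is an order isomorphism onto the strength functions and $f$ is strictly increasing, this means that $B\le B'\iff f(B)\le f(B')$ on $\Bpp$. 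If $f$ maps $]0,\infty[$ onto itself, then $A\mapsto f(A)$ is an order automorphism of $\Bpp$, so Lemma \ref{L:1} gives $f(t)=ct$, whence $f(t)=t$ by $f(1)=1$; but then $A\sigma B=B$ for all $A,B$, contradicting symmetry. If $f$ maps $]0,\infty[$ onto $]0,\beta[$ with $\beta<\infty$, then, exactly as in the proof of Theorem \ref{T:ASS}, composing $f$ with $s\mapsto\frac{1}{\beta-s}-\frac{1}{\beta}$ makes $A\mapsto f(A)$ an order automorphism of $\Bpp$; by Lemma \ref{L:1} this composite is linear, which on solving back gives $f(t)=\beta^2ct/(\beta ct+1)$ for some $c>0$, and then $f(1)=1$ together with $tf(1/t)=f(t)$ forces $\beta=2$, $c=1/2$, i.e.\ $f(t)=2t/(1+t)$. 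Thus $\sigma$ is the harmonic mean.

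The step I expect to be the crux is the passage from the operator identity to $\l(f(B),P)=f(\l(B,P))$: one must insert a rank-one projection into \eqref{E:71} (hence the limiting argument through property (c)), carefully unfold \eqref{E:36} and the mixed term $P\sigma(cP)$, and then recognize that the resulting scalar relation is exactly the statement that $A\mapsto f(A)$ is an order automorphism of $\Bpp$, which is what lets Lemma \ref{L:1} finish the job. The adjoint reduction and the concluding elementary identification of $f$ are routine.
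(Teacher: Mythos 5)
Your proposal is correct and follows essentially the same route as the paper's proof: evaluate \eqref{E:71} at rank-one projections via the strength-function formula \eqref{E:36} to get $\l(f(B),P)=f(\l(B,P))$, conclude that $A\mapsto f(A)$ preserves order in both directions, invoke Lemma \ref{L:1} (with the same fractional-linear reduction as in Theorem \ref{T:ASS} when the range of $f$ is bounded) to force $f(t)=2t/(1+t)$, and handle $f(0)>0$ by passing to the adjoint $\sigma^*$. You even supply details the paper leaves implicit, namely the approximation $P+\tfrac1n I\downarrow P$ via property (c) and the verification that $\sigma^*$ inherits \eqref{E:71}.
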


\begin{proof}
Only the sufficiency requires proof.
Assume that \eqref{E:71} holds for $\sigma$. Let $f$ be the operator monotone function representing $\sigma$. 
Assume first that $f(0)=0$. We compute in a way similar to the proof of Theorem \ref{T:ASS}. Let $A=P$ be a rank-one projection on $H$ and $B\in \Bpp$ be an arbitrary element. The left hand side of \eqref{E:71} is equal to
\begin{equation*}
P\sigma (I\sigma B)=P\sigma f(B)=f(\lambda (f(B),P))P.
\end{equation*}
As for the right hand side, we compute
\begin{equation*}
(P\sigma I)\sigma (P\sigma B)=
P\sigma( f(\lambda(B,P))P)=f(f(\lambda(B,P)))P.
\end{equation*}
Therefore, it follows that
\begin{equation*}
\lambda (f(B),P)=f(\lambda(B,P)).
\end{equation*}
Since this holds for every rank-one projection $P$ on $H$, as in the proof of Theorem \ref{T:ASS}, we easily conclude that for any $B,B'\in \Bpp$, we have $B\leq B'$ if and only if
$f(B)\leq f(B')$.

We distinguish two cases. First, if $f$ maps onto $]0,\infty[$, then by Lemma \ref{L:1} it follows that $f$ is a positive scalar multiple of the identity which contradicts to the symmetric property of $f$, i.e., to the identity $tf(1/t)=f(t)$, $t>0$. Therefore, $f$ must map onto some finite interval $]0,\beta[$, $\beta<\infty$. Just as in the corresponding part of the proof of Theorem \ref{T:ASS}, we then get that $f$ is necessarily of the form 
\begin{equation*}\label{E:72}
f(t)=\frac{\beta^2ct}{\beta ct+1}, \quad t>0
\end{equation*}
with some positive scalar $c$. By the symmeric property of $f$, it follows easily that $\beta c=1$ and then from $f(1)=1$ we deduce that $\beta=2$. Therefore,
\begin{equation*}\label{E:73}
f(t)=\frac{2 t}{t+1}, \quad t>0,
\end{equation*}
which means that $\sigma$ is the harmonic mean.

Assume now that $f(0)>0$. Then, as in the last part of the proof of Theorem \ref{T:ASS},
we can consider the mean corresponding to the function $t\mapsto 1/f(1/t)$, i.e., the adjoint $\sigma^*$ of $\sigma$. Clearly, it also satisfies \eqref{E:71} and we apply the first part of the proof to conclude that $\sigma^*$ is the harmonic mean. This implies that the original mean $\sigma$ is the arithmetic mean and the proof is complete.
\end{proof}

We have already asserted (see the discussion before Theorem \ref{T:Ca}) that a symmetric Kubo-Ando mean $\sigma$ on $\Bpp$ is a quasi-arithmetic operator mean only in the case where $\sigma$ is either the arithmetic or the harmonic mean. This means that for a symmetric Kubo-Ando mean $\sigma$ on $\Bpp$, there is no continuous injective scalar valued function $\varphi$ on $]0,\infty[$ for which we have
\begin{equation}\label{E:veg2}
\varphi(A\sigma B)=\frac{\varphi(A)+\varphi(B)}{2}, \quad A,B\in \Bpp.
\end{equation}
Theorem \ref{T:H1} has a much stronger consequence stated in the next result. Namely, it says that the only Kubo-Ando means which can be transformed to the arithmetic mean by an injective transformation from $\Bpp$ into an arbitrary linear space (or, more generally, into any uniquely 2-divisible commutative semigroup) are the arithmetic and the harmonic means. We emphasize that here we are speaking about  transformations on the operator structure $\Bpp$ in general and not only about scalar valued functions and the continuous function calculus as transformations which appear in \eqref{E:veg2}.

More precisely, we have the following statement.

\begin{corollary}\label{C:10}
Assume that $\sigma$ is a symmetric Kubo-Ando mean and there is an injective transformation $\phi$ from $\Bpp$ into a linear space (or more generally, into any uniquely 2-divisible commutative semigroup) which satisfies
\begin{equation}\label{E:V1}
\phi(A\sigma B)=\frac{\phi(A)+\phi(B)}{2}, \quad A,B\in \Bpp.
\end{equation}
Then $\sigma$ is either the arithmetic or the harmonic mean.
\end{corollary}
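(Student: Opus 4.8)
The plan is to reduce Corollary~\ref{C:10} to Theorem~\ref{T:H1} by showing that any such $\phi$ forces the identity \eqref{E:71}. The key observation is that \eqref{E:V1} is exactly the statement that $\phi$ intertwines $\sigma$ with the "midpoint" operation $(x,y)\mapsto (x+y)/2$ on the target semigroup $G$. Since $G$ is uniquely $2$-divisible, this midpoint operation is well defined, it is commutative, and — crucially — it satisfies the mediality (bisymmetry) identity
\[
\frac{\frac{x+y}{2}+\frac{z+w}{2}}{2}=\frac{\frac{x+z}{2}+\frac{y+w}{2}}{2},\quad x,y,z,w\in G,
\]
which is a triviality in a commutative semigroup once one checks that the division by powers of $2$ is consistent. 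Consequently $\sigma$ inherits, via the injective map $\phi$, every algebraic identity that the midpoint operation enjoys; in particular it inherits \eqref{E:1}, hence the restricted two-variable version \eqref{E:71} obtained from \eqref{E:1} by setting $B=A$ (i.e.\ $A,B,C,D\mapsto A,A,I,B$).

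So the first step is to make the "pushforward of an identity" argument precise: for $A,B\in\Bpp$ apply $\phi$ to $(A\sigma A)\sigma(I\sigma B)$ and to $(A\sigma I)\sigma(A\sigma B)$, repeatedly using \eqref{E:V1} to move $\phi$ inside, obtaining in both cases $\tfrac14\bigl(\phi(A)+\phi(A)+\phi(I)+\phi(B)\bigr)$ — where the fourfold average must be interpreted using unique $2$-divisibility as the unique element $u$ with $2(2u)=\phi(A)+\phi(A)+\phi(I)+\phi(B)$, and one checks this is associative/commutative so the two groupings agree. Since $\phi$ is injective, $(A\sigma A)\sigma(I\sigma B)=(A\sigma I)\sigma(A\sigma B)$ for all $A,B\in\Bpp$, which is precisely \eqref{E:71}. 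The second step is then immediate: Theorem~\ref{T:H1} applies to the symmetric Kubo-Ando mean $\sigma$ and yields that $\sigma$ is the arithmetic or the harmonic mean.

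The only genuinely delicate point — and the one I would write out carefully — is the bookkeeping with unique $2$-divisibility in a mere commutative semigroup (no cancellation, no identity element assumed): one must verify that the expression "$(x+y+z+w)/4$" is unambiguous, i.e.\ that halving is a well-defined single-valued operation (this is exactly unique $2$-divisibility), that it commutes with addition, and that iterated halvings of sums regroup freely. All of this is elementary semigroup manipulation, and once it is in place the mediality identity for the midpoint operation is automatic and the reduction to Theorem~\ref{T:H1} goes through verbatim. If one is content to state the corollary only for $\phi$ into a linear space, this technical layer disappears entirely and the proof is a two-line computation followed by an appeal to Theorem~\ref{T:H1}.
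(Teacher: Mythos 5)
Your proposal is correct and follows essentially the same route as the paper: push $\phi$ through both sides via \eqref{E:V1}, use commutativity and unique $2$-divisibility (halving distributes over addition) to see both images equal the same fourfold average, invoke injectivity to get \eqref{E:71}, and then apply Theorem~\ref{T:H1}. The paper states the computation for the full four-variable identity \eqref{E:1} and then specializes, but this is only a cosmetic difference from your direct verification of \eqref{E:71}.
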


\begin{proof}
For any $A,B,C,D\in \Bpp$, one can compute
\[
\begin{gathered}
4\phi((A\sigma B) \sigma (C\sigma D))=\phi(A)+\phi(B)+\phi(C)+\phi(D)\\=
\phi(A)+\phi(C)+\phi(B)+\phi(D)=
4\phi((A\sigma C)\sigma (B\sigma D)).
\end{gathered}
\]
It follows that $\sigma$ satisfies \eqref{E:71}, hence Theorem \ref{T:H1} applies and we obtain the desired conclusion.
\end{proof}

We remark that 
Proposition 7 in \cite{ML17e} says that if on the positive definite cone of a $C^*$-algebra an injective transformation $\phi$ as in \eqref{E:V1} exists for the geometric mean in the place of $\sigma$, then the algebra is necessarily commutative. Now, the natural question arises if the statement in Corollary \ref{C:10} holds for any non-commutative $C^*$-algebra in the place of $\Bh$.

We can go further and close the paper with
the general question that how the results presented above survive in the setting of general $C^*$-algebras.
As another example, is the statement in Proposition \ref{P:nonex} valid in any non-commutative $C^*$-algebra? In other words, does the existence of a map $M$ satisfying the conditions formulated there on the positive definite cone of a $C^*$-algebra forces the underlying algebra to be commutative?
Concerning the other results: are the assertions in Theorems \ref{T:QA}, \ref{T:assoc}, \ref{T:H1} and Corollaries \ref{C:QA}, \ref{C:10} valid in any (or in some general large classes of) non-commutative $C^*$-algebras?
To point out that these are certainly nontrivial problems, we recall that our main tools to obtain the above mentioned  results in the paper were: the structure theorems of order automorphisms of the positive definite cone and the space of self-adjoint operators in $\Bh$, and the knowledge on strength functions including the formula \eqref{E:36}. No one of those tools has direct extensions for general $C^*$-algebras and this makes our problems hopefully challenging.

\bibliographystyle{amsplain}

\end{document}